\pdfoutput=1
\documentclass[final,hidelinks,onefignum,onetabnum]{siamart250211}
\usepackage{amsmath,amsfonts,graphicx,placeins}
\usepackage[caption=false]{subfig}
\usepackage{algorithm,algpseudocode}
\usepackage{multirow}
\newcommand{\WeightedMaximum}{1.11\times10^{-14}}
\newcommand{\BetaMaximum}{1.05\times10^{-14}}
\newcommand{\ActionLargestSeconds}{0.108}
\newcommand{\ActionCacheMB}{336.6}
\newcommand{\ActionSlope}{1.02}

\newcommand{\MultiSteps}{6}

\newcommand{\BBOSteps}{5}

\newcommand{\SDCDirectSteps}{42}

\newcommand{\EigenDirectSeconds}{0.0462}
\newcommand{\EigenContourSeconds}{0.0164}

\newcommand{\AbelMinSteps}{6}
\newcommand{\AbelMaxSteps}{20}

\newcommand{\AbelFourBandSeconds}{0.0234}

\newcommand{\AbelFourDirectSeconds}{0.5462}

\newcommand{\AbelHundredBandSeconds}{0.0263}
\newcommand{\AbelHundredDirectSeconds}{0.5251}
\newcommand{\AbelThousandBandSeconds}{0.0283}

\newcommand{\AbelThousandDirectSeconds}{0.5391}

\newcommand{\SweepEightN}{175000}
\newcommand{\SweepEightSteps}{194}

\newcommand{\SweepEightInitialResidual}{1.50\times10^{-4}}

\newcommand{\SweepEightThirdResidual}{5.51\times10^{-12}}

\newcommand{\md}{\mathrm d}
\newcommand{\T}{\top}
\newsiamremark{remark}{Remark}
\headers{CONTOUR ACTIONS FOR JACOBI FRACTIONAL POLYNOMIALS}{X. LIU}
\title{Contour-Based Fast Fractional Integration in a Jacobi Fractional Polynomial Basis}
\author{Xiaolin Liu\thanks{School of Mathematical Sciences, Hainan University, 58 Renmin Avenue, Haikou 570228, Hainan, P.R. China; and Key Laboratory of Engineering Modeling and Statistical Computation of Hainan Province. Email: \email{xiaolin9907@hainanu.edu.cn}.}}
\hypersetup{pdftitle={Contour-Based Fast Fractional Integration in a Jacobi Fractional Polynomial Basis},pdfauthor={Xiaolin Liu}}
\graphicspath{{figures/}}
\begin{document}
\raggedbottom
\maketitle
\begin{abstract}
We develop a contour-based method for applying Riemann--Liouville fractional integrals in a Jacobi fractional polynomial basis with an endpoint weight.
A Gamma-ratio representation reduces the action to contour quadrature and banded ultraspherical solves. For a fixed number of contour nodes, the cost and storage are linear in the polynomial dimension.
For fractional equations, we construct selected diagonals of a short rational approximation directly and use their banded factorizations as reusable right preconditioners for Krylov iterations.
Numerical experiments show accurate integral actions and substantially reduced solution costs across Abel, multi-order, BBO, deferred-correction, eigenvalue, and boundary-value problems.
\end{abstract}
\begin{keywords}
fractional integration, Jacobi fractional polynomials, operator functions, double-exponential quadrature, rational preconditioning, Abel integral equation
\end{keywords}
\begin{MSCcodes}65R20, 65D32, 33C45, 26A33\end{MSCcodes}

\section{Introduction}
We consider the fast application of the Riemann--Liouville fractional integral
\begin{equation}\label{eq:integral}
 (\mathcal I_x^\mu f)(x)=\frac1{\Gamma(\mu)}\int_{-1}^x(x-s)^{\mu-1}f(s)\,\md s,
 \qquad \mu>0,\quad -1<x\le1.
\end{equation}
This weakly singular Volterra operator is a basic building block of
fractional integral equations and integral reformulations of fractional
differential equations. Their solutions often contain noninteger endpoint
powers, motivating spectral bases adapted to these singularities.

The sum-space method of Hale and Olver~\cite{hale18} uses direct sums of
weighted polynomial spaces. For fixed rational orders and smooth
coefficients, it produces banded systems with $O(N)$ construction and
solution costs. However, the combined basis is not orthogonal, and its
finite systems can be severely ill-conditioned~\cite{pu23}. The method is
also restricted to rational orders, with the number of component spaces
increasing with the denominator of the order.

Pu and Fasondini~\cite{pu23} develop a spectral method based on orthogonal
Jacobi fractional polynomials (JFPs), accommodating fractional powers in a
single orthogonal basis. We use the Chebyshev-type form
\begin{equation}\label{eq:basis}
 \varphi_j^{\alpha,\beta}(x)=t^\alpha T_j(2t^\beta-1),\qquad \alpha>-1,\quad\beta>0,\quad j\ge0,
\end{equation}
where $t=(1+x)/2$ and $T_j$ is the Chebyshev polynomial of the first kind.
The endpoint weight and fractional coordinate accommodate algebraic
singularities without restricting the order to rational values. Their
integration-matrix construction, however, requires extended precision for
stabilization and has substantial computational cost.

In our previous work~\cite{liu25}, we develop a stable Chebyshev-based
recurrence that constructs the JFP integration matrix in double precision
with $O(N^2)$ operations and storage. This improves the practicality of the
JFP method, but constructing and applying the complete matrix still has
quadratic cost. The remaining objective is to retain the orthogonal JFP
representation while reducing the integral-action cost to $O(N)$.

The present work achieves this through a Gamma-ratio representation and
contour quadrature~\cite{higham2008,rieder2023}. Each contour node requires
a banded ultraspherical solve~\cite{olver13}. With a fixed number of nodes,
the action uses $O(N)$ operations and storage without forming the complete
integration matrix. A short rational approximation also supplies a reusable
banded right preconditioner for GMRES. Numerical examples assess integral
accuracy and solution costs for fractional equations and eigenvalue problems.

Section~\ref{sec:fast_jfp} derives the action, its quadrature bound, and
its cost model; Sections~\ref{sec:neumann} and
\ref{sec:rational} give the operator bound and banded preconditioner.
Section~\ref{sec:experiments} presents numerical experiments.

\section{Gamma-ratio representation and contour quadrature}
\label{sec:fast_jfp}

\subsection{Jacobi fractional polynomials}
\label{sec:jfp_basis}
The Chebyshev-type JFPs used in~\cite{liu25} are obtained by an algebraic
change of variable followed by an endpoint weight:
\begin{equation}\label{eq:mathcomp_basis}
 t=\frac{1+x}{2},\qquad z=2t^\beta-1,\qquad
 Q_j^{\alpha,\beta}(x)=t^\alpha T_j(z),
 \qquad \alpha>-1,\quad\beta>0.
\end{equation}
They span $t^\alpha$ times polynomials in $t^\beta$. The endpoint exponent
represents a prescribed algebraic singularity, while $\beta$ resolves the
fractional powers left in the regular factor. This is an orthogonal
basis: pulling back the Chebyshev measure gives, up to a positive constant,
\[
 \varpi_{\alpha,\beta}(x)
 =t^{\beta/2-1-2\alpha}(1-t^\beta)^{-1/2},\qquad -1<x<1.
\]
Indeed, $Q_iQ_j\varpi_{\alpha,\beta}(x)\,\md x$ is proportional to
$T_i(z)T_j(z)(1-z^2)^{-1/2}\,\md z$. The basis is therefore evaluated and
manipulated through Chebyshev recurrences. Analyticity of the regular
factor in $z$ yields geometric coefficient decay. For an order
$\mu=p\beta$, $p\in\mathbb N^+$, fractional integration maps this space
into itself with a degree increase of $p$.

\subsection{Exact Gamma-ratio functional calculus}
\label{sec:fast_derivation}
Let $\mu>0$, $\beta>0$, and $\alpha>-1$, and write
$f_N(x)=t^\alpha q(z)$, $t=(1+x)/2$, $z=2t^\beta-1$, $q\in\mathbb P_{N-1}$.
Define the Euler operator and scalar Gamma ratio by
\[
 \mathcal A=\beta(1+z)\partial_z,\qquad
 g_{\mu,\alpha}(\zeta)=
 \frac{\Gamma(1+\alpha+\zeta)}{\Gamma(1+\alpha+\zeta+\mu)}.
\]
The key identity is
\begin{equation}\label{eq:euler_chebyshev_action}
 \mathcal AT_j=\beta j\left(T_0+2\sum_{i=1}^{j-1}T_i+T_j\right),\qquad j\ge1,
 \qquad \mathcal AT_0=0.
\end{equation}
Thus $\mathbb P_{N-1}$ is invariant under $\mathcal A$. Its restriction
to this finite-dimensional polynomial space defines the semigroup and
analytic operator functions used below.

\begin{theorem}[Gamma-ratio representation]\label{thm:gamma_action}
For $\mu>0$, $\beta>0$, $\alpha>-1$, and $q\in\mathbb P_{N-1}$,
\begin{equation}\label{eq:gamma_action}
 (\mathcal I_x^\mu f_N)(x)
 =2^\mu t^{\alpha+\mu}[g_{\mu,\alpha}(\mathcal A)q](2t^\beta-1).
\end{equation}
If $\mu=p\beta$ with $p\in\mathbb N^+$, division by $t^\alpha$
yields a polynomial of degree at most $N+p-1$ in $z$.
\end{theorem}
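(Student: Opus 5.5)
The proof works on the monomial-type basis $(1+z)^k$, $k=0,\dots,N-1$, of $\mathbb P_{N-1}$, and then extends by linearity. The starting observation is that $1+z=2t^\beta$, so $(1+z)^k=2^kt^{k\beta}$. Hence the corresponding function is $f=t^\alpha(1+z)^k=2^k t^{\alpha+k\beta}$.

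\textbf{Step 1: eigenfunctions of $\mathcal A$.} The Euler operator acts diagonally on these powers:
\[
 \mathcal A(1+z)^k=\beta(1+z)\,k(1+z)^{k-1}=k\beta\,(1+z)^k .
\]
So $(1+z)^k$ is an eigenvector of $\mathcal A$ restricted to $\mathbb P_{N-1}$, with eigenvalue $k\beta$. These eigenvalues are distinct for distinct $k$. Therefore $\mathcal A$ is diagonalizable on $\mathbb P_{N-1}$ with spectrum $\{0,\beta,\dots,(N-1)\beta\}$.

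Any analytic function $g$ defined near this spectrum is then given by $g(\mathcal A)(1+z)^k=g(k\beta)(1+z)^k$. This holds whether $g(\mathcal A)$ is defined by Cauchy integral, by interpolation, or by the Jordan/eigen form, since all of these agree for diagonalizable matrices. For $g_{\mu,\alpha}$ we need $1+\alpha+k\beta\notin\{0,-1,-2,\dots\}$. This holds because $1+\alpha+k\beta\ge 1+\alpha>0$, so $g_{\mu,\alpha}$ is analytic on a neighborhood of the spectrum, namely the half plane $\operatorname{Re}\zeta>-1-\alpha$.

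\textbf{Step 2: the scalar Riemann--Liouville identity.} We use the standard Beta-integral formula. Substituting $s=-1+2\sigma$, so that $x-s=2(t-\sigma)$, gives
\[
 \mathcal I_x^\mu\big[t^{\gamma}\big]=\frac{2^\mu}{\Gamma(\mu)}\int_0^t (t-\sigma)^{\mu-1}\sigma^\gamma\,\md\sigma
 =2^\mu\frac{\Gamma(1+\gamma)}{\Gamma(1+\gamma+\mu)}\,t^{\gamma+\mu},\qquad \gamma>-1 .
\]
Apply this with $\gamma=\alpha+k\beta$. Using $2^kt^{k\beta}=(1+z)^k$, we obtain
\[
 \mathcal I_x^\mu\big[t^\alpha(1+z)^k\big]=2^\mu t^{\alpha+\mu}\,g_{\mu,\alpha}(k\beta)\,(1+z)^k .
\]
By Step 1 this equals $2^\mu t^{\alpha+\mu}[g_{\mu,\alpha}(\mathcal A)(1+z)^k](z)$. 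Linearity of both sides in $q$ then gives \eqref{eq:gamma_action}.

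\textbf{Step 3: the case $\mu=p\beta$.} Here $t^{\mu}=2^{-p}(1+z)^p$. Dividing \eqref{eq:gamma_action} by $t^\alpha$ gives
\[
 (1+z)^p\,[g_{\mu,\alpha}(\mathcal A)q](z).
\]
Since $g_{\mu,\alpha}(\mathcal A)q\in\mathbb P_{N-1}$ by invariance, this is a polynomial of degree at most $N+p-1$.

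\textbf{Main obstacle.} The only delicate point is justifying that "$g(\mathcal A)$" in the paper's sense, which is meant for contour quadrature, coincides with the eigenvalue-wise definition. I would handle this with the Cauchy integral over a contour enclosing $\{k\beta\}$ inside $\operatorname{Re}\zeta>-1-\alpha$. Applied to an eigenvector, the resolvent $(\zeta-\mathcal A)^{-1}$ gives $(\zeta-k\beta)^{-1}$, and the Cauchy formula then returns $g(k\beta)$.

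Note that the Chebyshev identity \eqref{eq:euler_chebyshev_action} is not needed for the proof. It serves only to show invariance of $\mathbb P_{N-1}$ in the Chebyshev basis, and that invariance also follows directly from Step 1.
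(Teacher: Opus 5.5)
Your proof is correct and is essentially the paper's argument. Both reduce to the eigenvectors $(1+z)^k$ of $\mathcal A$, with eigenvalues $\beta k$, together with Euler's beta integral. The paper first passes through the substitution $r=e^{-y}$ and the dilation semigroup $e^{-y\mathcal A}$ and then integrates termwise, while you apply the power rule for $\mathcal I_x^\mu t^{\gamma}$ directly to each $t^{\alpha+k\beta}$. The only slip is cosmetic: dividing by $t^\alpha$ gives $2^{\mu-p}(1+z)^p[g_{\mu,\alpha}(\mathcal A)q](z)$ rather than $(1+z)^p[g_{\mu,\alpha}(\mathcal A)q](z)$, and this does not affect the degree bound.
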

\begin{proof}
Substituting $s=-1+(1+x)r$ and then $r=e^{-y}$ in \eqref{eq:integral} gives
\begin{align*}
 (\mathcal I_x^\mu f_N)(x)
 &=\frac{2^\mu t^{\alpha+\mu}}{\Gamma(\mu)}\int_0^1
 r^\alpha(1-r)^{\mu-1}q((1+z)r^\beta-1)\,\md r\\
 &=\frac{2^\mu t^{\alpha+\mu}}{\Gamma(\mu)}\int_0^\infty
 e^{-(1+\alpha)y}(1-e^{-y})^{\mu-1}
 q((1+z)e^{-\beta y}-1)\,\md y.
\end{align*}
The dilation $v(y,z)=q((1+z)e^{-\beta y}-1)$ satisfies
$\partial_yv=-\mathcal Av$ and $v(0,z)=q(z)$, hence
$v(y,z)=[e^{-y\mathcal A}q](z)$. Euler's beta integral yields
\begin{equation}\label{eq:gamma_laplace}
\begin{aligned}
 \frac1{\Gamma(\mu)}\int_0^\infty
 e^{-(1+\alpha+\zeta)y}(1-e^{-y})^{\mu-1}\,\md y
 &=\frac{\mathrm B(1+\alpha+\zeta,\mu)}{\Gamma(\mu)}\\
 &=g_{\mu,\alpha}(\zeta),\qquad \Re\zeta>-1-\alpha.
\end{aligned}
\end{equation}
To apply this scalar identity, use the eigenbasis
\[
 \mathcal A(1+z)^k=\beta k(1+z)^k,\qquad 0\le k<N.
\]
For $q(z)=\sum_{k=0}^{N-1}\widehat q_k(1+z)^k$, termwise integration
in the finite sum gives
\[
 \frac1{\Gamma(\mu)}\int_0^\infty
 e^{-(1+\alpha)y}(1-e^{-y})^{\mu-1}e^{-y\mathcal A}q\,\md y
 =\sum_{k=0}^{N-1}\widehat q_k g_{\mu,\alpha}(\beta k)(1+z)^k
 =g_{\mu,\alpha}(\mathcal A)q.
\]
Every integral converges for $\mu>0$ and $\alpha>-1$, proving
\eqref{eq:gamma_action}. The eigenbasis is used only in the proof;
the numerical method does not convert to a power basis.
The operator function preserves degree, and multiplication by
$t^\mu=((1+z)/2)^p$ raises it by at most $p$ when $\mu=p\beta$.
\end{proof}

\begin{remark}[Muti-order]
As in the JFP construction of~\cite{liu25}, several orders
$\mu_r=p_r\beta$, $p_r\in\mathbb N^+$, are treated in a common coordinate
$z=2t^\beta-1$. Theorem~\ref{thm:gamma_action} applies to each order
with the same $\mathcal A_\beta$; only $g_{\mu_r,\alpha}$ and the factor
$2^{\mu_r}((1+z)/2)^{p_r}$ change.
\end{remark}

\subsection{Contour representation and regularization}
For a clockwise closed contour $\mathcal C$ containing the finite
Euler spectrum but none of the Gamma-ratio poles, Cauchy's formula is
\begin{equation}\label{eq:cauchy_functional_calculus}
 g_{\mu,\alpha}(\mathcal A)=\frac1{2\pi i}\oint_{\mathcal C}
 g_{\mu,\alpha}(\zeta)(\mathcal A-\zeta\mathrm{Id})^{-1}\,\md\zeta.
\end{equation}
The following deformation replaces this finite closed path by one
infinite branch and a closing arc whose contribution tends to zero.
The orientation and the constant correction are specified explicitly.

\paragraph{Endpoint exponent and pole separation}
The poles of $g_{\mu,\alpha}$ lie among $-1-\alpha,-2-\alpha,\ldots$;
for integer $\mu=n_\mu$, only the first $n_\mu$ remain. The Euler spectrum
contains the eigenvalue $0$. Thus the
contour must enclose $0$ while its left branch passes between $0$ and the
nearest pole $-1-\alpha$ (Figure~\ref{fig:contour}). The separation is $1+\alpha$ and tends to
zero as $\alpha\to-1^+$. A fixed negative vertex eventually lies on the
wrong side of the approaching pole.
Rather than move the contour into this shrinking gap, we use the Gamma
recurrence
\begin{equation}\label{eq:alpha_shift}
 g_{\mu,\alpha}(\mathcal A)
 =\left[\mathrm{Id}+\mu\bigl(\mathcal A+(1+\alpha)\mathrm{Id}\bigr)^{-1}\right]
 g_{\mu,\alpha+1}(\mathcal A),\qquad -1<\alpha<0.
\end{equation}
This follows from $\Gamma(w+1)=w\Gamma(w)$; both sides are analytic
on the nonnegative spectrum. The contour evaluates $g_{\mu,\alpha+1}$,
whose nearest pole lies strictly to the left of $-1$, and one additional
real shifted solve applies the prefactor. For $\alpha\ge0$ no shift is needed.
This moves the nearest pole away from the zero eigenvalue in the contour
evaluation and handles the approaching pole exactly without increasing
the number of contour nodes.

\begin{figure}[!htbp]\centering
\includegraphics[width=.65\textwidth]{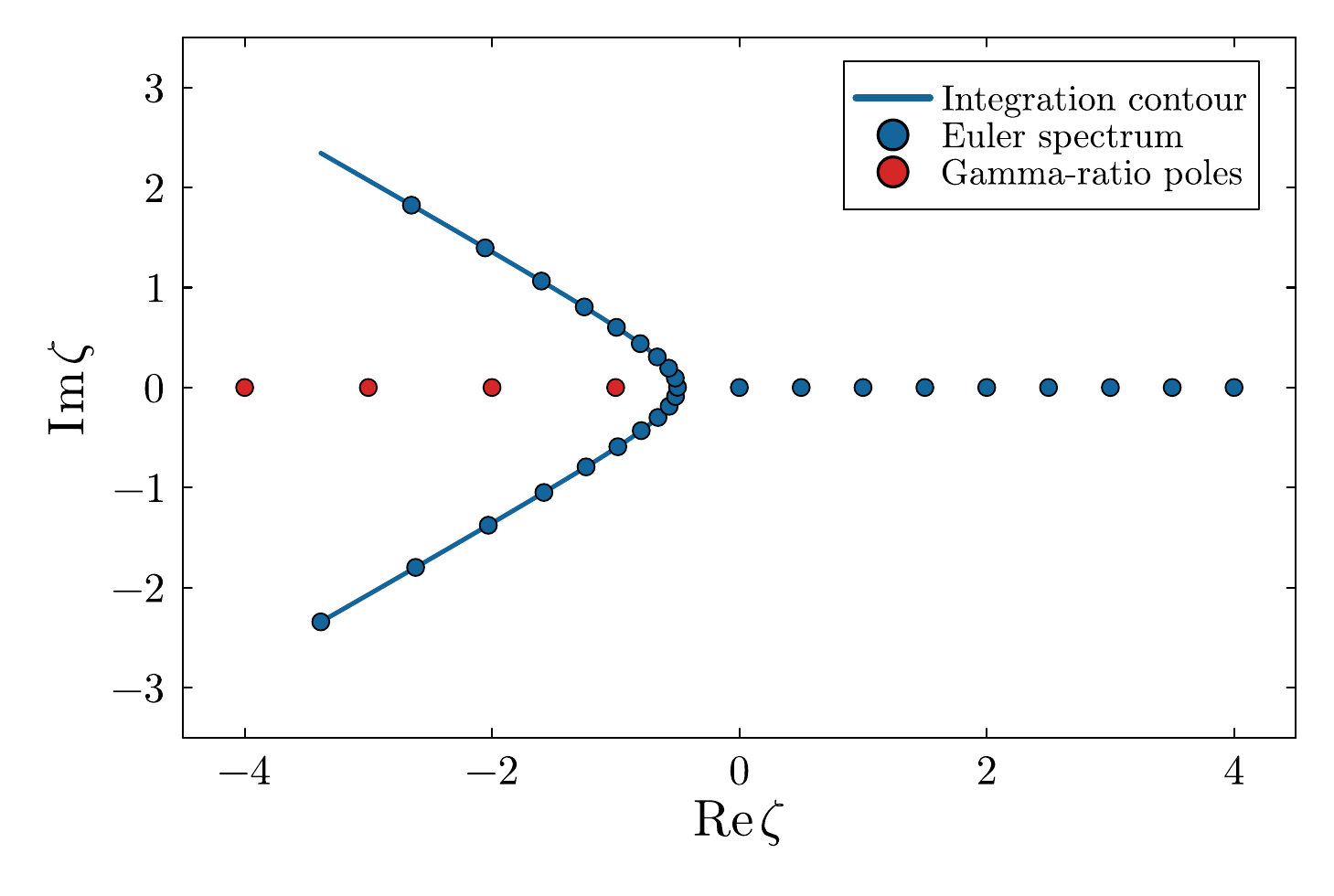}
\caption{Contour geometry for $\alpha=0$, $\beta=\mu=1/2$.
The path separates the nonnegative Euler spectrum from the negative
Gamma-ratio poles. The clockwise closure on the right is sent to infinity.}
\label{fig:contour}
\end{figure}
\FloatBarrier

\paragraph{Contour and constant correction}
For a nonnegative endpoint exponent, we consider the left branch of a
hyperbola with a double-exponential parameterization,
\begin{equation}\label{eq:de_contour}
 \zeta(y)=-\delta\cosh(\kappa\sinh y)
 +i\delta\tan\theta\,\sinh(\kappa\sinh y),
 \qquad y\in\mathbb R.
\end{equation}
The contour geometry is shown in Figure~\ref{fig:contour}.
The parameters $\delta$, $\theta$, and $\kappa$ are fixed at the values
specified in Section~\ref{sec:experiments}, independently of $N$.
The curve is traversed from its lower to its upper branch and closed
clockwise on the right. It encloses the entire nonnegative spectrum,
including zero, and excludes the Gamma-ratio poles $-1-\alpha,-2-\alpha,\ldots$.
When the original exponent is negative, the following formulas are
applied with $\alpha+1$, followed by \eqref{eq:alpha_shift}.
The double-exponential parameterization follows the contour-quadrature
idea of Rieder~\cite{rieder2023}, with the geometry reflected to the left.
The following estimates use the polynomial maximum norm.

We use the following exact resolvent identity:
\begin{equation}\label{eq:resolvent_correction}
 (\mathcal A-\zeta\mathrm{Id})^{-1}+\frac{\mathrm{Id}}{\zeta}
 =\frac{(\mathcal A-\zeta\mathrm{Id})^{-1}\mathcal A}{\zeta}.
\end{equation}
This identity holds for every nonzero $\zeta$ outside the spectrum of
$\mathcal A$ and is used at every contour node.
On the clockwise closed path $\mathcal C$, we have
\begin{equation}\label{eq:clockwise_residues}
\begin{aligned}
 \frac1{2\pi i}\oint_{\mathcal C}
 g_{\mu,\alpha}(\zeta)(\mathcal A-\zeta\mathrm{Id})^{-1}\,\md\zeta
 &=g_{\mu,\alpha}(\mathcal A),\\
 \frac1{2\pi i}\oint_{\mathcal C}
 g_{\mu,\alpha}(\zeta)\frac{\mathrm{Id}}{\zeta}\,\md\zeta
 &=-g_{\mu,\alpha}(0)\mathrm{Id}.
\end{aligned}
\end{equation}
Adding back $g_{\mu,\alpha}(0)\mathrm{Id}$ thus preserves the exact operator.
For real-valued $q$, $\zeta(-y)=\overline{\zeta(y)}$ and
$\zeta'(-y)=-\overline{\zeta'(y)}$. Pairing the two halves gives
\begin{equation}\label{eq:contour_action}
\begin{aligned}
 g_{\mu,\alpha}(\mathcal A)q
 ={}&g_{\mu,\alpha}(0)q+\frac1\pi\int_0^\infty
 \Im\Bigl\{\zeta'(y)g_{\mu,\alpha}(\zeta(y))\\
 &\qquad\times\left[(\mathcal A-\zeta(y)\mathrm{Id})^{-1}
              +\zeta(y)^{-1}\mathrm{Id}\right]q\Bigr\}\,\md y.
\end{aligned}
\end{equation}
The correction in \eqref{eq:resolvent_correction} removes the leading
$-\zeta^{-1}\mathrm{Id}$ term of the resolvent expansion, improving its
decay to $O(|\zeta|^{-2})$ on each fixed polynomial space. For constant
$q$, the corrected integral vanishes and the action is exact.

\subsection{Quadrature and linear-cost resolvent actions}
For the numerical action, we now pass to Chebyshev coefficients. Write
$q(z)=\sum_{j=0}^{N-1}q_jT_j$ and
$\mathbf q=(q_0,\ldots,q_{N-1})^\T$.
The coefficient vector of $\mathcal A q$ is generated directly from
\eqref{eq:euler_chebyshev_action}; no dense representation of the Euler
operator is formed. The shifted equation is expressed in the equivalent
Chebyshev--ultraspherical system below.

For $\alpha\ge0$, take $y_j=jh$, $j=0,\ldots,M-1$, $h=Y/(M-1)$, and write
$\zeta_j=\zeta(y_j)$. Let $R_{-\zeta}^{(N)}\mathbf q$ denote the
Chebyshev coefficient vector of
$(\mathcal A-\zeta\mathrm{Id})^{-1}q$, computed from the
banded system; thus $R_b^{(N)}$ denotes the coefficient matrix of
$(\mathcal A+b\mathrm{Id})^{-1}$. For real $\mathbf q$, define
\begin{equation}\label{eq:contour_discrete}
 \widetilde G_{\mu,\alpha,M}^{(N)}\mathbf q
 =g_{\mu,\alpha}(0)\mathbf q+\sum_{j=0}^{M-1}\Im\left\{
  \omega_j\left[R_{-\zeta_j}^{(N)}\mathbf q
       +\frac{\mathbf q}{\zeta_j}\right]\right\},
\end{equation}
where $\omega_j=h\zeta'(y_j)g_{\mu,\alpha}(\zeta_j)/\pi$ for
$1\le j\le M-2$, and the two endpoint weights are halved.
The $M$ systems count only the upper branch
and its real vertex; conjugate systems are not solved again.
For $-1<\alpha<0$, $\widetilde G_{\mu,\alpha,M}^{(N)}$ is defined
by the coefficient form of \eqref{eq:alpha_shift}, with the contour action
at $\alpha+1$ in place of $g_{\mu,\alpha+1}$; the real resolvent is retained exactly.

The next theorem shows that, for a fixed accuracy, the required number of
contour nodes is independent of $N$.

\begin{theorem}[Dimension-independent quadrature]
\label{thm:uniform_quadrature}
Fix $\mu,\beta>0$, $\alpha>-1$, $0<\theta<\pi/2$, and $\kappa>0$.
Take $0<\delta<1+\alpha$ for $\alpha\ge0$, or $0<\delta<2+\alpha$
when \eqref{eq:alpha_shift} is used for $-1<\alpha<0$.
Let $\widetilde q$ have the coefficients \eqref{eq:contour_discrete}, with
$L=M-1$, $Y=\tfrac12\log L$, and $h=Y/L$.
In exact arithmetic, for every real $q\in\mathbb P_{N-1}$,
\begin{equation}\label{eq:uniform_quadrature_rate}
 \|g_{\mu,\alpha}(\mathcal A)q-\widetilde q\|_\infty
 \le C\exp\!\left(-c\sqrt{\frac{M}{\log M}}\right)\|q\|_\infty,
 \qquad M\ge M_0,
\end{equation}
where $\|q\|_\infty=\max_{-1\le z\le1}|q(z)|$ and $C,c,M_0>0$
depend only on the fixed parameters. Thus convergence is superalgebraic,
and any fixed tolerance is attained with $M$ independent of $N$.
\end{theorem}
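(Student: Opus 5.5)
The plan is to reduce the operator estimate to a scalar contour-quadrature estimate that is uniform over the Euler spectrum $\{\beta k:k\ge0\}$, using the eigenbasis $(1+z)^k$ from the proof of Theorem~\ref{thm:gamma_action} together with a norm bound for the dilation semigroup. Concretely, I will rewrite the corrected resolvent through \eqref{eq:resolvent_correction} as $\zeta^{-1}(\mathcal A-\zeta\mathrm{Id})^{-1}\mathcal A$. Then both the exact action \eqref{eq:contour_action} and its truncated trapezoidal discretization \eqref{eq:contour_discrete} become $\Phi(\mathcal A)q$ for scalar functions. The exact one is $\Phi(\lambda)=g_{\mu,\alpha}(\lambda)-g_{\mu,\alpha}(0)$, written as the contour integral of $F_\lambda(y)=\pi^{-1}\Im\{\zeta'(y)g_{\mu,\alpha}(\zeta(y))\lambda/(\zeta(y)(\lambda-\zeta(y)))\}$. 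The discrete one is the truncated trapezoidal sum of $F_\lambda$. The $\|\cdot\|_\infty$ norm on $\mathbb P_{N-1}$ is not diagonalized by $(1+z)^k$, so I will not bound coefficientwise in that basis. Instead I will represent the error function $E(\lambda)=\Phi(\lambda)-\Phi_M(\lambda)$, for $\Re\lambda\ge0$, as a Laplace-type transform $E(\lambda)=\int_0^\infty e^{-\lambda s}\,e(s)\,\md s$. This is possible because each resolvent $(\lambda-\zeta)^{-1}$ with $\Re\zeta<0$ equals $\int_0^\infty e^{-(\lambda-\zeta)s}\md s$. Since $e^{-s\mathcal A}q=q((1+z)e^{-\beta s}-1)$ is a dilation with $\|e^{-s\mathcal A}q\|_\infty\le\|q\|_\infty$, it follows that $\|E(\mathcal A)q\|_\infty\le\|e\|_{L^1}\|q\|_\infty$, uniformly in $N$. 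For $-1<\alpha<0$, the prefactor in \eqref{eq:alpha_shift} is itself $\int_0^\infty(\delta_0+\mu e^{-(1+\alpha)s})e^{-s\mathcal A}\md s$, which is a contraction-type bounded operator, so this case reduces to the $\alpha+1$ case.

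The kernel $e(s)$ is the difference between an integral and its trapezoidal approximation applied to the $y$-function $\frac1\pi\Im\{\zeta'(y)g(\zeta(y))\zeta(y)^{-1}e^{\zeta(y)s}\}$ (times the factor coming from $\lambda$; I will absorb the factor $\lambda/(\lambda-\zeta)=1+\zeta/(\lambda-\zeta)$ accordingly). To bound $\|e\|_{L^1}$ I will split it into two errors in the standard double-exponential way. The first is the discretization error of the infinite trapezoidal rule with step $h$. I will handle it by extending $y$ to a strip $|\Im y|<d$ on which $\zeta(y)$ stays in the half-plane $\Re\zeta<0$. This requires $\delta<1+\alpha$ (or $<2+\alpha$) and small enough $d$ relative to $\theta$ and $\kappa$, so that the contour avoids the poles of $g$. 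On that strip I will use the Stirling estimate $|g_{\mu,\alpha}(\zeta)|\lesssim|\zeta|^{-\mu}$ to obtain an error of size $O(e^{-2\pi d/h})$. The second is the truncation error at $Y$, which is controlled by the double-exponential decay of $|\zeta'g/\zeta|\lesssim e^{-\mu\kappa e^{|y|}/2}$. Both estimates must be integrated in $s$. The factor $e^{\Re\zeta(y)s}$ gives $\int_0^\infty|e^{\zeta s}|\md s=1/|\Re\zeta|\le 1/\delta'$ on the strip, because the real part stays bounded away from zero, and this keeps $\|e\|_{L^1}$ finite.

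Finally, I will insert $Y=\frac12\log L$ and $h=Y/L$. The truncation term is then roughly $\exp(-c\,\mu\kappa L^{1/2})$, and the discretization term is $\exp(-2\pi dL/\log L)$. The stated form $\exp(-c\sqrt{M/\log M})$ is weaker than both, so I will use it as a uniform minorant; any slack lets me pick constants crudely. I expect the main obstacle to be the uniform control over the strip near the real vertex $\zeta=-\delta$, where $\Re\zeta$ is smallest. There I must check that the complexified hyperbola neither crosses into $\Re\zeta\ge0$, which would break the Laplace representation, nor reaches the pole at $-1-\alpha$. My first attempt will be to choose $d<\min(\pi/2-\theta,\,\ldots)$ explicitly and verify the geometry by writing $\cosh(\kappa\sinh(y+i\eta))$ in real and imaginary parts.
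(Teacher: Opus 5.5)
Your semigroup reduction is sound and is the paper's first step in different clothing. Representing the corrected resolvent through the dilation $e^{-s\mathcal A}$, a contraction in $\|\cdot\|_\infty$, gives bounds independent of $N$; this is the paper's estimate $\|(\mathcal A-\zeta\mathrm{Id})^{-1}\|\le(-\Re\zeta)^{-1}$. Your truncation estimate and your treatment of $-1<\alpha<0$ through \eqref{eq:alpha_shift} are also fine.

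\textbf{The gap is the discretization step: the strip you need does not exist.} There is no strip $|\Im y|<d$ on which $\zeta(y)$ stays in $\Re\zeta<0$ and avoids the singularities. With $u=\kappa\sinh y$ one has $\zeta=-\delta\sec\theta\cosh(u-i\theta)$, so $\Re\zeta=-\delta\sec\theta\cosh(\Re u)\cos(\Im u-\theta)$. For $y=x+i\tau$, however, $\Im u=\kappa\cosh x\sin\tau$ is unbounded as $|x|\to\infty$ for every fixed $\tau\ne0$. Since $\zeta$ is $2\pi i$-periodic in $u$, $\zeta(y)$ therefore sweeps through the whole plane. It enters $\Re\zeta>0$, where $e^{\zeta s}$ grows and your kernel $e(s)$ no longer exists. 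It also passes through the Gamma poles and the Euler spectrum at points whose distance to the real $y$-axis tends to zero; for the poles $-1-\alpha-k$ this happens at $\Im y\approx\theta/(\kappa\cosh x)$.

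Hence there is no strip, uniform in $N$, on which the integrand is analytic with your bounds. The estimate $O(e^{-2\pi d/h})$, and with it your rate $\exp(-2\pi dL/\log L)$, is unavailable. The obstacle lies in the tails $|y|\to\infty$, not at the vertex $\zeta=-\delta$. The vertex is harmless: there $\kappa\sinh y\approx\kappa y$, and $\delta<1+\alpha$ keeps the nearest pole away.

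\textbf{What survives, and how the paper uses it.} The integrand is still analytic in the shrinking disks $|w-x|\le r/\cosh x$ with $\kappa re^r<\eta$. On these disks $|\Im(\kappa\sinh w)|<\eta$, so the estimates in the $u$-strip $|\Im u|\le\eta$ apply. The paper argues as follows.
\begin{itemize}
\item Cauchy's estimate on these disks gives $\|\Phi_N^{(p)}(x)\|\le Cp!\,r^{-p}\cosh^{p+1}x\,e^{-a|\sinh x|}$.
\item Integrating gives $\int_{\mathbb R}\|\Phi_N^{(p)}\|\,\md x\le CB^p(p!)^2$.
\item Poisson summation with $p$ integrations by parts bounds the infinite trapezoidal error by $C(Bh)^p(p!)^2$.
\item The choice $p\approx(4Bh)^{-1/2}$ turns this into $e^{-c/\sqrt h}$.
\end{itemize}
Equivalently, you can shift the Fourier integral onto the curve $\Im y=-r/\cosh(\Re y)$. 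Balancing $e^{-a\sinh x}$ against $e^{-\omega r/\cosh x}$ then gives $|\widehat\Phi_N(\omega)|\le Ce^{-c\sqrt\omega}$.

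Since $h^{-1/2}=\sqrt{2L/\log L}$, this is exactly the rate in \eqref{eq:uniform_quadrature_rate}. The exponent $\sqrt{M/\log M}$ is not slack over two stronger bounds. That it looked slack should have signalled that the strip argument could not be right.
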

\begin{proof}
First let $\alpha\ge0$; operator norms are induced by $\|\cdot\|_\infty$,
and all constants below depend only on the fixed parameters.
The dilation $[e^{-s\mathcal A}q](z)=q((1+z)e^{-\beta s}-1)$ is a
contraction for $s\ge0$. Its Laplace resolvent formula gives
\[
 \|(\mathcal A-\zeta\mathrm{Id})^{-1}\|
 \le\int_0^\infty e^{s\Re\zeta}\,\md s=(-\Re\zeta)^{-1},
 \qquad \Re\zeta<0.
\]
Define the full-line integrand
\[
 \Phi_N(y)=\zeta'(y)g_{\mu,\alpha}(\zeta(y))
 \left[(\mathcal A-\zeta(y)\mathrm{Id})^{-1}
       +\zeta(y)^{-1}\mathrm{Id}\right].
\]
Put $u=\kappa\sinh y$, so $\zeta=-\delta\sec\theta\cosh(u-i\theta)$.
Choose $0<\eta<\min\{\theta,\pi/2-\theta\}$ with
$\delta\sec\theta\cos(\theta-\eta)<1+\alpha$, possible since
$\delta<1+\alpha$. For $u=v+it$, $|t|\le\eta$,
$-\Re\zeta\ge\delta\sec\theta\cos(\theta+\eta)\cosh v>0$.
Also $\Im\zeta=\delta\sec\theta\sinh v\sin(\theta-t)$ vanishes only
at $v=0$, where $\zeta>-(1+\alpha)$, so no Gamma pole is met.
The ends stay a positive angle from the negative real axis; the
asymptotic $g_{\mu,\alpha}(\zeta)\sim\zeta^{-\mu}$ and compactness give
\[
 \begin{aligned}
 |g_{\mu,\alpha}(\zeta)|&\le C(\cosh v)^{-\mu},\qquad
 |\partial_u\zeta|\le C\cosh v,\\
 \|(\mathcal A-\zeta\mathrm{Id})^{-1}+\zeta^{-1}\mathrm{Id}\|
 &\le(-\Re\zeta)^{-1}+|\zeta|^{-1}\le C(\cosh v)^{-1}.
 \end{aligned}
\]
Choose $r>0$ with $\kappa r e^r<\eta$. If $|w-x|\le r/\cosh x$,
$x\in\mathbb R$, then
$|\kappa\sinh w-\kappa\sinh x|\le\kappa e^r\cosh x\,|w-x|<\eta$.
The strip estimates therefore give
$\|\Phi_N(w)\|\le C\cosh x\,e^{-a|\sinh x|}$, $a=\kappa\mu>0$.
Cauchy's estimate on this disk and $s=|\sinh x|$ yield, for integers
$p\ge0$ and a fixed $B>0$,
\[
 \begin{aligned}
 \|\Phi_N^{(p)}(x)\|&\le Cp!r^{-p}\cosh^{p+1}x\,e^{-a|\sinh x|},\\
 \int_{\mathbb R}\|\Phi_N^{(p)}(x)\|\,\md x
 &\le2Cp!r^{-p}\int_0^\infty(1+s)^p e^{-as}\,\md s
 \le CB^p(p!)^2.
 \end{aligned}
\]
Here $(1+s)^p$ is integrated by shifting $s+1$ and using
$\int_0^\infty s^p e^{-as}\,\md s=p!/a^{p+1}$.
These derivative bounds make $\Phi_N$ Schwartz. For its infinite
trapezoidal sum, Poisson summation, $p$ integrations by parts, and
$\sum_{k\ge1}k^{-p}\le2$ give the error $C(Bh)^p(p!)^2$, $p\ge2$.
Taking $p=\lfloor(4Bh)^{-1/2}\rfloor$ and using $p!\le p^p$
bounds this by $C4^{-p}\le Ce^{-c/\sqrt h}$ for small $h$.

Since $b(x)=\cosh x\,e^{-a\sinh x}$ decreases for large $x$,
truncation at $Y$ with half endpoint weights contributes at most
\[
 C\left(hb(Y)+\int_Y^\infty b(x)\,\md x\right)
 =C\left(h\cosh Y+a^{-1}\right)e^{-a\sinh Y}
 \le Ce^{-(a/2)\sinh Y}
\]
for $h\le1$ and large $Y$. Conjugate pairing identifies this truncated
sum with \eqref{eq:contour_discrete}. Thus the total error is bounded by
$C(e^{-c/\sqrt h}+e^{-(a/2)\sinh Y})\|q\|_\infty$.
Since $h^{-1/2}=\sqrt{2L/\log L}$ and $\sinh Y\ge\sqrt L/4$ for
$L\ge2$, \eqref{eq:uniform_quadrature_rate} follows.
For $-1<\alpha<0$, apply the proof at $\alpha+1$ and then
\eqref{eq:alpha_shift}; the prefactor norm is at most
$1+\mu/(1+\alpha)$ by the same resolvent bound, independently of $N$.
\end{proof}

All shifted solves use Chebyshev and ultraspherical coefficients.
Let $S_0$ convert Chebyshev coefficients to $C^{(1)}$ coefficients,
and let $\frac{\beta}{2}D_A$ represent the Euler map in these coordinates.
The identities
$T_0=C_0^{(1)}$, $T_1=C_1^{(1)}/2$, and
$T_j=(C_j^{(1)}-C_{j-2}^{(1)})/2$ for $j\ge2$ give $S_0$.
Combining $T_j'=jC_{j-1}^{(1)}$ with multiplication by $1+z$ gives
$D_A/2$. Their matrix forms are
\begin{equation}\label{eq:jfp_column_maps}
 {\setlength{\arraycolsep}{3pt}
 D_A=\begin{pmatrix}
 0&2&2&&&\\
  &1&4&3&&\\
  &&2&6&4&\\
  &&&\ddots&\ddots&\ddots
 \end{pmatrix},\qquad
 S_0=\frac12\begin{pmatrix}
 2&0&-1&&&\\
  &1&0&-1&&\\
  &&1&0&-1&\\
  &&&\ddots&\ddots&\ddots
 \end{pmatrix}.
 }
\end{equation}
Blank entries are zero. Shifted solves use the leading $N\times N$
sections of these maps. Since the diagonal of $S_0$ is nonzero,
\begin{equation}\label{eq:shifted_system_equivalence}
 (\mathcal A-\zeta\mathrm{Id})v=q
 \quad\Longleftrightarrow\quad
 \left(\frac{\beta}{2}D_A-\zeta S_0\right)\mathbf v=S_0\mathbf q.
\end{equation}
Here $\mathbf v$ contains Chebyshev coefficients of $v$; the equation
is expressed in ultraspherical coordinates. The matrix is upper triangular
with three diagonals and nonsingular on the contour, so forming
$S_0\mathbf q$ and solving cost $O(N)$ operations.
For $|\zeta|\ge\max\{32,2\beta(N-1)^2\}$, we instead solve
$\left(\frac{\beta}{2}D_A-\zeta S_0\right)\mathbf v_{\rm corr}
=\frac{\beta}{2\zeta}D_A\mathbf q$, evaluating the right side of
\eqref{eq:resolvent_correction} without subtracting nearly equal terms.
Below this threshold we use the left side. Compensated summation
accumulates the contour contributions.
For $\mathbf v_+=\widetilde G_{\mu,\alpha+1,M}^{(N)}\mathbf q$,
the extra solve in \eqref{eq:alpha_shift} uses
$[\frac{\beta}{2}D_A+(1+\alpha)S_0]\mathbf v_b=S_0\mathbf v_+$, followed by
$\mathbf v_+\leftarrow\mathbf v_++\mu\mathbf v_b$. It has the same three-band
structure, costs $O(N)$ operations, and reuses cached real coefficients.

The multiplication matrix in Chebyshev coefficients has the following
form used in the ultraspherical spectral method~\cite{olver13}.
\begin{lemma}[Multiplication matrix]\label{thm:M}
Let $\chi(z)=\sum_{j\ge0}\chi_jT_j(z)$ and
$q(z)=\sum_{j\ge0}q_jT_j(z)$ have absolutely summable coefficients.
With $\mathbf q=(q_j)_{j\ge0}$, the coefficient vector of $\chi(z)q(z)$ is $M_{\chi}^{(0)}\mathbf q$, where
\begin{equation}\label{eq:chebyshev_multiplication}
{\setlength{\arraycolsep}{3pt}
 M_{\chi}^{(0)}=\frac12\left[
 \begin{pmatrix}
 2\chi_0&\chi_1&\chi_2&\chi_3&\cdots\\
 \chi_1&2\chi_0&\chi_1&\chi_2&\ddots\\
 \chi_2&\chi_1&2\chi_0&\chi_1&\ddots\\
 \chi_3&\chi_2&\chi_1&2\chi_0&\ddots\\
 \vdots&\ddots&\ddots&\ddots&\ddots
 \end{pmatrix}
 +
 \begin{pmatrix}
 0&0&0&0&\cdots\\
 \chi_1&\chi_2&\chi_3&\chi_4&\ddots\\
 \chi_2&\chi_3&\chi_4&\chi_5&\ddots\\
 \chi_3&\chi_4&\chi_5&\chi_6&\ddots\\
 \vdots&\ddots&\ddots&\ddots&\ddots
 \end{pmatrix}
 \right].
}
\end{equation}
The same matrix applies to multiplication in the unweighted JFP basis,
since $Q_j^{0,\beta}(x)=T_j(z)$.
\end{lemma}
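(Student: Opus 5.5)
The plan is to use the product-to-sum identity $T_kT_j=\tfrac12(T_{j+k}+T_{|j-k|})$, which holds for all $j,k\ge0$ and follows from $T_j(\cos\phi)=\cos j\phi$ and the cosine addition formula. Because both coefficient sequences are absolutely summable, the double series $\sum_{k,j}\chi_kq_jT_kT_j$ converges absolutely and uniformly on $[-1,1]$, using $|T_m|\le1$. It may therefore be rearranged freely, and the coefficient of each $T_i$ in $\chi q$ is read off by collecting the terms. Uniqueness of Chebyshev coefficients for absolutely convergent Chebyshev series (orthogonality with respect to $(1-z^2)^{-1/2}$) then identifies these collected coefficients as the coefficient vector of $\chi q$.

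The collection is split into three contributions for a fixed output index $i$ and column $j$.
\begin{itemize}
\item From $T_{j+k}$ with $j+k=i$: this requires $k=i-j\ge0$ and gives $\tfrac12\chi_{i-j}$ for $j\le i$.
\item From $T_{|j-k|}$ with $k=j-i\ge0$: this gives $\tfrac12\chi_{j-i}$ for $j\ge i$.
\item From $T_{|j-k|}$ with $k=j+i$: this gives $\tfrac12\chi_{i+j}$.
\end{itemize}
The first two together yield $\tfrac12\chi_{|i-j|}$ off the diagonal. On the diagonal $i=j$ they coincide, giving $\tfrac12\cdot2\chi_0$, which is the Toeplitz matrix with $2\chi_0$ on its diagonal.

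The third contribution is the Hankel term $\tfrac12\chi_{i+j}$. For $i=0$ it coincides with the second case ($k=j$, $|j-k|=0$) and must not be double counted. That is why the first row of the Hankel part is zero. For $i\ge1$ the cases $k=j-i$ and $k=j+i$ are distinct, so both contributions are kept.

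The main obstacle, though minor, is this bookkeeping at the index boundaries: the $i=0$ row, the diagonal $i=j=0$ (where all three cases collapse to $k=0$), and the use of $T_{|j-k|}$ when $k>j$. I would check these entries directly against the displayed matrix \eqref{eq:chebyshev_multiplication}. For example, the $(0,0)$ entry is $\tfrac12\cdot2\chi_0=\chi_0$, which matches $\chi_0T_0\cdot q_0T_0$.

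Finally, since $Q_j^{0,\beta}(x)=T_j(z)$ by \eqref{eq:mathcomp_basis}, multiplying unweighted JFP expansions in $x$ is the same as multiplying Chebyshev series in $z$. The same matrix therefore applies to the unweighted JFP basis.
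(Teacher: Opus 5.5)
Your proposal is correct and follows the same route as the paper, which simply applies $T_iT_j=(T_{i+j}+T_{|i-j|})/2$ and collects coefficients. Your extra details fill in what the paper leaves implicit: absolute and uniform convergence to justify rearrangement, uniqueness of Chebyshev coefficients, and the index bookkeeping for row $0$ and the diagonal.
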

\begin{proof}
Apply $T_iT_j=(T_{i+j}+T_{|i-j|})/2$ and collect coefficients.
\end{proof}
For $\mu=p\beta$, multiplication by the degree-$p$ polynomial
$((1+z)/2)^p$ produces all $N+p$ output coefficients.
The potentially singular weight is never sampled during an action.

\paragraph{Stable scalar weights}
For $0<\mu<1$, put $\eta=1+\alpha+\zeta$.
For $|\eta|<32$, the code evaluates
$\log\Gamma(\eta)-\log\Gamma(\eta+\mu)$ directly. When $\Re \eta<0$ and
$|\eta|\ge32$, the reflection formula gives
\begin{equation}\label{eq:gamma_reflection}
 \frac{\Gamma(\eta)}{\Gamma(\eta+\mu)}
 =\frac{\Gamma(1-\eta-\mu)}{\Gamma(1-\eta)}
   \frac{\sin\pi(\eta+\mu)}{\sin\pi \eta}.
\end{equation}
On the upper branch, the logarithm of the sine ratio is evaluated as
\[
 -i\pi\mu+\operatorname{log1p}(-e^{2\pi i(\eta+\mu)})
          -\operatorname{log1p}(-e^{2\pi i \eta}).
\]
Both exponentials decay with $\Im \eta$, avoiding the overflow of the
individual sines. The reflected Gamma arguments lie in the right
half-plane for $0<\mu<1$.

For $\Re \eta\ge0$ and $|\eta|\ge32$, we retain twelve terms of the Taylor
expansion of $\log\Gamma(\eta)-\log\Gamma(\eta+\mu)$ in $\mu$.
For $0<\mu<1$, this gives sufficient scalar truncation accuracy for
double-precision evaluation, independently of $N$. For a negative endpoint
exponent the weights use $\alpha+1$, as required by
\eqref{eq:alpha_shift}, and are cached for repeated actions.

For stable scalar evaluation at higher orders, write $\mu=n_\mu+\mu_{\rm f}$,
where $n_\mu$ is a nonnegative integer and $0\le\mu_{\rm f}<1$. Then
\[
 \frac{\Gamma(\eta)}{\Gamma(\eta+\mu)}
 =\frac{\Gamma(\eta)}{\Gamma(\eta+\mu_{\rm f})}
   \prod_{j=0}^{n_\mu-1}(\eta+\mu_{\rm f}+j)^{-1}.
\]
The first ratio is one when $\mu_{\rm f}=0$; otherwise it uses the
log-domain evaluation above.


Algorithm~\ref{alg:contour} summarizes the contour preparation and action.

\begin{algorithm}[tb]
\caption{Reusable contour preparation and fractional-integral action}
\label{alg:contour}
\begin{algorithmic}[1]
\Require $N$, order $\mu=p\beta$ with integer $p\ge1$, endpoint exponent
$\alpha>-1$, and $M$ contour nodes.
\Ensure A reusable plan and the integral coefficients.
\State Compute nodes and log-domain scalar weights, using $\alpha+1$
if $\alpha<0$. \hfill $O(M)$
\State Cache inverse diagonals and real row coefficients; allocate vectors.
\hfill $O(MN)$
\State For input $\mathbf q$, form $S_0\mathbf q$, $\frac{\beta}{2}D_A\mathbf q$ and
the constant-mode accumulator. \hfill $O(N)$
\State At each node solve the corrected upper-three-band system and
accumulate its compensated contribution. \hfill $O(MN)$
\State If $\alpha<0$, apply the exact real-resolvent correction.
\hfill $O(N)$
\State Multiply by $2^\mu((1+z)/2)^p$ in coefficient space. \hfill $O(p(N+p))$
\State Return all $N+p$ coefficients.
\hfill $O(N+p)$
\end{algorithmic}
\end{algorithm}
\begin{remark}[Reuse and action cost]
The nodes, scalar weights, and shifted-system coefficients in
Algorithm~\ref{alg:contour} are prepared once and reused, including for
the real and imaginary parts of complex inputs. For fixed $p$,
preparation and each action cost $O(MN)$ operations; the tables require
$O(MN)$ storage and the working vectors $O(N)$. The polynomial multiplication costs
$O(p(N+p))$; fixed $M,p$ give linear costs in $N$.
\end{remark}

\section{Neumann decay and operator GMRES}
\label{sec:neumann}

The second-kind Abel equation is
\begin{equation}\label{eq:abel}
 u(x)+\lambda(\mathcal I_x^\mu u)(x)=f(x).
\end{equation}
We analyze exact GMRES~\cite{saad1986} in
$L^2(-1,1;\mathbb C)$ using the Volterra structure.

\begin{lemma}\label{lem:neumann_candidate}
Let $\mathcal B=\mathrm{Id}+\mathcal K$ be bounded and invertible on a Hilbert space.
Let $r_{\rm res}^{(k)}$ denote its residual after $k$ GMRES
steps. Then
\begin{equation}\label{eq:neumann_candidate}
 \frac{\|r_{\rm res}^{(k)}\|}{\|r_{\rm res}^{(0)}\|}\le\|\mathcal K^k\|,
 \qquad r_{\rm res}^{(0)}\ne0.
\end{equation}
\end{lemma}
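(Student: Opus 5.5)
We will use the minimal-residual property of GMRES together with a specific polynomial candidate.

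Let $r_{\rm res}^{(0)}=f-\mathcal B u^{(0)}$. The $k$th GMRES iterate minimizes $\|f-\mathcal B u\|$ over $u\in u^{(0)}+\mathcal K_k(\mathcal B,r_{\rm res}^{(0)})$. Equivalently,
\[
 \|r_{\rm res}^{(k)}\|=\min\{\|P(\mathcal B)r_{\rm res}^{(0)}\|:\ P\in\mathbb P_k,\ P(0)=1\}.
\]
This characterization is standard and holds verbatim for a bounded operator on a Hilbert space. The Krylov space is finite-dimensional and the minimization is an orthogonal projection of $r_{\rm res}^{(0)}$ onto $\mathcal B\mathcal K_k$.

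The key step is to pick a single admissible residual polynomial that reproduces $\mathcal K^k$. We take
\[
 P(\lambda)=(1-\lambda)^k .
\]
It has degree $k$ and satisfies $P(0)=1$, so it is admissible. Since $\mathcal B=\mathrm{Id}+\mathcal K$ and $\mathcal K$ commutes with $\mathcal B$, the operator polynomial calculus gives
\[
 P(\mathcal B)=(\mathrm{Id}-\mathcal B)^k=(-\mathcal K)^k .
\]
Hence
\[
 \|r_{\rm res}^{(k)}\|\le\|(-\mathcal K)^k r_{\rm res}^{(0)}\|\le\|\mathcal K^k\|\,\|r_{\rm res}^{(0)}\|.
\]
Dividing by $\|r_{\rm res}^{(0)}\|\neq0$ gives \eqref{eq:neumann_candidate}.

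The only point needing care is the minimal-residual characterization when GMRES terminates early. If the Krylov space becomes invariant before step $k$, the residual is already zero, because $\mathcal B$ is invertible. The minimum is then $0$ and the bound holds trivially. Otherwise the affine minimization over $u^{(0)}+\mathcal K_k$ corresponds exactly to minimization over residual polynomials of degree at most $k$ normalized at zero. I expect no real obstacle here: invertibility of $\mathcal B$ is used only in this breakdown case, and boundedness only to make $P(\mathcal B)$ and $\mathcal K^k$ well defined.
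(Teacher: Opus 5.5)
Your proof is correct and follows the paper's argument: the minimal-residual polynomial characterization of GMRES with the admissible choice $(1-z)^k$, which gives $(-\mathcal K)^k$ applied to $r_{\rm res}^{(0)}$. Your remarks on early termination and on the existence of the minimizer through a finite-dimensional projection are sound extra detail; the paper omits them.
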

\begin{proof}
The minimal-residual characterization is
\[
 \|r_{\rm res}^{(k)}\|
 =\min_{\pi_{\rm res}\in\mathbb P_k,\,\pi_{\rm res}(0)=1}
   \|\pi_{\rm res}(\mathcal B)r_{\rm res}^{(0)}\|.
\]
Choosing $\pi_{\rm res}(z)=(1-z)^k$ gives
$\pi_{\rm res}(\mathcal B)=(-\mathcal K)^k$ and proves the bound.
\end{proof}

The semigroup property gives an explicit bound for the Abel equation.

\begin{theorem}[Abel residual bound]\label{thm:abel_gmres}
For $\mu>0$ and $\lambda\in\mathbb C$, the operator
$\mathrm{Id}+\lambda\mathcal I_x^\mu$ is invertible on $L^2(-1,1)$.
Its exact GMRES residual satisfies
\begin{equation}\label{eq:abel_gamma_bound}
 \frac{\|r_{\rm res}^{(k)}\|}{\|r_{\rm res}^{(0)}\|}
 \le\frac{(|\lambda|2^\mu)^k}{\Gamma(1+k\mu)}.
\end{equation}
For fixed $\mu,\lambda$, this bound decays faster than every
geometric sequence. In particular, for each $0<\gamma_{\rm geom}<1$,
\begin{equation}\label{eq:abel_geometric_bound}
 \frac{\|r_{\rm res}^{(k)}\|}{\|r_{\rm res}^{(0)}\|}
 \le E_\mu(|\lambda|2^\mu/\gamma_{\rm geom})\gamma_{\rm geom}^k,
 \qquad E_\mu(z)=\sum_{j=0}^\infty\frac{z^j}{\Gamma(1+j\mu)}.
\end{equation}
\end{theorem}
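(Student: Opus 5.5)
We will apply Lemma~\ref{lem:neumann_candidate} with $\mathcal K=\lambda\mathcal I_x^\mu$, so that $\mathcal B=\mathrm{Id}+\lambda\mathcal I_x^\mu$. Everything then reduces to a norm bound for the powers of $\mathcal K$ on $L^2(-1,1)$.

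\textbf{Step 1: powers via the semigroup property.} The Riemann--Liouville integrals satisfy $\mathcal I_x^{\mu}\mathcal I_x^{\nu}=\mathcal I_x^{\mu+\nu}$. This follows from Fubini and Euler's beta integral, in the same way as \eqref{eq:gamma_laplace}. Hence $\mathcal K^k=\lambda^k\mathcal I_x^{k\mu}$.

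\textbf{Step 2: bound for $\mathcal I_x^{\nu}$.} For $\nu>0$, the operator $\mathcal I_x^{\nu}$ is convolution on an interval of length $2$ with the kernel $s^{\nu-1}/\Gamma(\nu)$, restricted to $0<s<2$. By Young's inequality, its $L^2$ operator norm is at most the $L^1$ norm of the kernel:
\[
 \int_0^2\frac{s^{\nu-1}}{\Gamma(\nu)}\,\md s=\frac{2^\nu}{\Gamma(1+\nu)}.
\]
For a self-contained argument, one can instead use Schur's test: the row and column integrals of the nonnegative kernel are both bounded by this same quantity. Taking $\nu=k\mu$ gives
\[
 \|\mathcal K^k\|\le\frac{(|\lambda|2^\mu)^k}{\Gamma(1+k\mu)}.
\]

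\textbf{Step 3: invertibility.} The bound from Step~2 is summable in $k$, because $\Gamma(1+k\mu)$ grows superexponentially. Therefore the Neumann series $\sum_k(-\mathcal K)^k$ converges in operator norm. Its sum is a bounded inverse of $\mathrm{Id}+\lambda\mathcal I_x^\mu$, and $\|\mathcal B^{-1}\|\le E_\mu(|\lambda|2^\mu)$.

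\textbf{Step 4: residual bounds.} Lemma~\ref{lem:neumann_candidate} together with Step~2 gives \eqref{eq:abel_gamma_bound}. For \eqref{eq:abel_geometric_bound}, set $z=|\lambda|2^\mu/\gamma_{\rm geom}$. Then
\[
 \frac{(|\lambda|2^\mu)^k}{\Gamma(1+k\mu)}=\gamma_{\rm geom}^k\,\frac{z^k}{\Gamma(1+k\mu)}\le\gamma_{\rm geom}^k E_\mu(z),
\]
since every term of $E_\mu$ is nonnegative. This bound holds for every $0<\gamma_{\rm geom}<1$, so the residual decays faster than any geometric sequence.

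The only step with real content is the operator-norm estimate in Step~2, and in particular checking that the constant is exactly $2^{k\mu}/\Gamma(1+k\mu)$, with no extra factor from the finite interval. Young's inequality applied to the zero extension handles this cleanly.
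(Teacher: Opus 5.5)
Your proposal is correct and follows the paper's proof essentially step by step. Both use the semigroup identity together with Young's inequality to bound $\|\mathcal K^k\|$, the Neumann series for invertibility, Lemma~\ref{lem:neumann_candidate} for the residual bound, and termwise domination by $E_\mu$ for the geometric bound; the paper also records Stirling asymptotics for $\Gamma(1+k\mu)$, which your argument does not need.
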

\begin{proof}
Young's convolution inequality and the beta integral give, respectively,
\[
 \|\mathcal I_x^\nu\|\le\frac{2^\nu}{\Gamma(1+\nu)},\qquad
 \mathcal I_x^a\mathcal I_x^b=\mathcal I_x^{a+b}
 \quad(\nu,a,b>0).
\]
For the second identity, the convolution of the two kernels is
$\Gamma(a+b)^{-1}(x-s)^{a+b-1}$; extension from bounded inputs
to $L^2$ follows from the first estimate. Hence, with
$c_\lambda=|\lambda|2^\mu$,
$\|(\lambda\mathcal I_x^\mu)^k\|\le c_\lambda^k/\Gamma(1+k\mu)$.
For $c_\lambda>0$, Stirling's formula yields
\begin{equation}\label{eq:neumann_stirling}
 \log\frac{c_\lambda^k}{\Gamma(1+k\mu)}
 =-\mu k\log k
   +k(\log c_\lambda+\mu-\mu\log\mu)+O(\log k).
\end{equation}
The majorant is summable, so the Neumann series converges in
operator norm to the inverse. Lemma~\ref{lem:neumann_candidate}
then gives \eqref{eq:abel_gamma_bound}. Its $k$th root tends to zero.
Each term $(c_\lambda/\gamma_{\rm geom})^k/\Gamma(1+k\mu)$ is bounded by the convergent
positive series $E_\mu(c_\lambda/\gamma_{\rm geom})$, proving
\eqref{eq:abel_geometric_bound}. If $\lambda=0$, GMRES terminates
after one step.
\end{proof}

The same decay extends to finite sums with bounded variable coefficients.

\begin{theorem}[Finite sums and bounded coefficients]\label{thm:multi_gmres}
Let
\[
 \mathcal K f=\sum_{j=1}^s a_{{\rm L},j}(x)\mathcal I_x^{\mu_j}[a_{{\rm R},j} f](x),
 \qquad a_{{\rm L},j},a_{{\rm R},j}\in L^\infty(-1,1),\quad \mu_j>0,\quad s<\infty.
\]
Set
\begin{equation}\label{eq:multi_constant}
 \mu_* =\min_{1\le j\le s}\mu_j,\qquad
 c_{\mathcal K}=\sum_{j=1}^s\|a_{{\rm L},j}\|_\infty\|a_{{\rm R},j}\|_\infty
       2^{\mu_j}\frac{\Gamma(\mu_*)}{\Gamma(\mu_j)}.
\end{equation}
Then $\mathrm{Id}+\mathcal K$ is invertible and its exact
GMRES residual satisfies
\begin{equation}\label{eq:multi_gamma_bound}
 \frac{\|r_{\rm res}^{(k)}\|}{\|r_{\rm res}^{(0)}\|}
 \le\frac{c_{\mathcal K}^k}{\Gamma(1+k\mu_*)}.
\end{equation}
\end{theorem}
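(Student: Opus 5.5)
The plan is to reduce everything to the single-order situation of Theorem~\ref{thm:abel_gmres} by a pointwise domination argument. Once we know that $\|\mathcal K^k\|\le c_{\mathcal K}^k/\Gamma(1+k\mu_*)$, invertibility follows exactly as there: the majorant is summable, so the Neumann series for $(\mathrm{Id}+\mathcal K)^{-1}$ converges in operator norm. Lemma~\ref{lem:neumann_candidate} then yields \eqref{eq:multi_gamma_bound} directly.

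\textbf{Step 1: kernel comparison.} For $-1<s<x\le1$ we have $0<x-s\le2$, and $\mu_j-\mu_*\ge0$. Hence
\[
 \frac{(x-s)^{\mu_j-1}}{\Gamma(\mu_j)}
 =\frac{(x-s)^{\mu_*-1}(x-s)^{\mu_j-\mu_*}}{\Gamma(\mu_j)}
 \le 2^{\mu_j-\mu_*}\frac{\Gamma(\mu_*)}{\Gamma(\mu_j)}\,
 \frac{(x-s)^{\mu_*-1}}{\Gamma(\mu_*)}.
\]
Consequently, for every nonnegative measurable $g$ we get the pointwise bound $\mathcal I_x^{\mu_j}g\le 2^{\mu_j-\mu_*}\frac{\Gamma(\mu_*)}{\Gamma(\mu_j)}\mathcal I_x^{\mu_*}g$. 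Since each $\mathcal I_x^{\mu_j}$ has a nonnegative kernel, $|\mathcal I_x^{\mu_j}[a_{{\rm R},j}f]|\le\|a_{{\rm R},j}\|_\infty\mathcal I_x^{\mu_j}|f|$. Multiplying by $|a_{{\rm L},j}|$ and summing over $j$ gives
\[
 |\mathcal Kf|\le c'\,\mathcal I_x^{\mu_*}|f|\quad\text{a.e.},\qquad c'=2^{-\mu_*}c_{\mathcal K}.
\]

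\textbf{Step 2: iteration.} By induction on $k$, using positivity of $\mathcal I_x^{\mu_*}$ (it is monotone on nonnegative functions) together with the semigroup identity $\mathcal I_x^a\mathcal I_x^b=\mathcal I_x^{a+b}$ established in the proof of Theorem~\ref{thm:abel_gmres}, we obtain
\[
 |\mathcal K^kf|\le c'^k\,\mathcal I_x^{k\mu_*}|f|.
\]
Taking $L^2$ norms and using $\|\mathcal I_x^{\nu}\|\le2^\nu/\Gamma(1+\nu)$ with $\nu=k\mu_*$ then gives
\[
 \|\mathcal K^k\|\le c'^k\,2^{k\mu_*}/\Gamma(1+k\mu_*)=c_{\mathcal K}^k/\Gamma(1+k\mu_*).
\]
In addition, $\mathcal K$ is bounded on $L^2$: this is the case $k=1$ of the same estimate.

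\textbf{Step 3: conclusion.} By the Stirling estimate \eqref{eq:neumann_stirling} with $c_\lambda$ replaced by $c_{\mathcal K}$ and $\mu$ by $\mu_*$, the $k$th root of $c_{\mathcal K}^k/\Gamma(1+k\mu_*)$ tends to zero. So $\sum_k\|\mathcal K^k\|<\infty$, and $\sum_k(-\mathcal K)^k$ is a bounded inverse of $\mathrm{Id}+\mathcal K$. Lemma~\ref{lem:neumann_candidate} applied to $\mathcal B=\mathrm{Id}+\mathcal K$ then finishes the proof. The degenerate case $c_{\mathcal K}=0$ means $\mathcal K=0$, and GMRES terminates immediately.

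The main obstacle is Step 1, more precisely the choice of comparison: the orders differ, so the naive bound $\|\mathcal K^k\|\le(\sum_j\|\mathcal K_j\|)^k$ loses the Gamma-factor decay. Reducing every order to the smallest one through the kernel inequality, at the cost of the factor $2^{\mu_j-\mu_*}\Gamma(\mu_*)/\Gamma(\mu_j)$, is what lets the powers collapse into a single $\mathcal I_x^{k\mu_*}$. The only technical care needed is to work with the positive majorant $|f|$ throughout, so that monotonicity of the positive operators can be applied before any norms are taken.
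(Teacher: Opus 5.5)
Your proposal is correct and follows the paper's proof essentially step for step. In both, the kernel domination $\upsilon^{\mu_j-1}\le2^{\mu_j-\mu_*}\upsilon^{\mu_*-1}$ on $0<\upsilon\le2$ gives $|\mathcal K f|\le 2^{-\mu_*}c_{\mathcal K}\,\mathcal I_x^{\mu_*}|f|$; positivity, induction and the semigroup property give $|\mathcal K^k f|\le(2^{-\mu_*}c_{\mathcal K})^k\,\mathcal I_x^{k\mu_*}|f|$; Young's inequality then yields $\|\mathcal K^k\|\le c_{\mathcal K}^k/\Gamma(1+k\mu_*)$, and invertibility and the GMRES bound follow as in Theorem~\ref{thm:abel_gmres}.
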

\begin{proof}
For $0<\upsilon\le2$, dominate each kernel by
$\upsilon^{\mu_j-1}\le2^{\mu_j-\mu_*}\upsilon^{\mu_*-1}$.
With $c=c_{\mathcal K}/2^{\mu_*}$ this implies
$|\mathcal K f|\le c\mathcal I_x^{\mu_*}|f|$ almost everywhere.
Positivity and induction give
$|\mathcal K^k f|\le c^k\mathcal I_x^{k\mu_*}|f|$.
Young's inequality proves
$\|\mathcal K^k\|\le c_{\mathcal K}^k/\Gamma(1+k\mu_*)$.
Summability, invertibility and the GMRES bound now follow as in
Theorem~\ref{thm:abel_gmres}. No commutation of the variable
coefficients with the integrals is needed.
\end{proof}

Thus the second-kind operators considered above are invertible, with
Neumann series converging in operator norm, and exact GMRES
converges supergeometrically in $L^2(-1,1)$. For fixed equation parameters,
the iteration bound for a prescribed relative residual tolerance is
uniform over all nonzero initial residuals. Its dependence on the
fractional orders and coefficient magnitudes motivates the preconditioning
developed next.


\section{Banded rational preconditioning}
\label{sec:rational}
For the single-order Abel equation \eqref{eq:abel}, take $\beta=\mu$.
The coefficient system is
\begin{equation}\label{eq:abel_finite}
 (\mathrm I_N+\lambda\widetilde I_{N,M}^\mu)\mathbf u=\mathbf f.
\end{equation}
Here $\widetilde I_{N,M}^\mu$ denotes the contour integration matrix.
We use a short rational approximation only to construct a banded right
preconditioner. Equation actions and residual checks retain the accurate
contour rule. GMRES uses fixed right preconditioning and a recomputed original residual.

\subsection{Short rational approximation}
\label{sec:abel_rational}
For $0<\mu<1$ and $\alpha\ge0$, the Gamma-ratio expansion~\cite{olv10} gives
\[
 g_{\mu,\alpha}(\zeta)
 =\left(\zeta+\alpha+\frac{1+\mu}{2}\right)^{-\mu}
 [1+O(|\zeta|^{-2})]
\]
in closed sectors avoiding the negative real axis. For $\Re \eta>0$,
\begin{equation}\label{eq:rational_stieltjes}
 \eta^{-\mu}=\frac{2\sin(\pi\mu)}{\pi}
 \int_{-\infty}^{\infty}\frac{e^{2(1-\mu)s}}{\eta+e^{2s}}\,\md s.
\end{equation}
Take sixteen trapezoidal nodes $s_j=-5+14j/15$, $0\le j<16$,
with endpoint factors $\vartheta_0=\vartheta_{15}=1/2$ and
$\vartheta_j=1$ otherwise.
Since the shifted-power model is already a coarse approximation, we use
$[-5,9]$ to cover the broad range of shifts $e^{2s}\in[e^{-10},e^{18}]$
that suffices for preconditioning in the reported tests, without requiring
high-accuracy evaluation of the Stieltjes integral.
The trapezoidal approximation is
\begin{equation}\label{eq:abel_rational_nodes}
\begin{aligned}
 &\frac{2\sin(\pi\mu)}{\pi}
 \int_{-\infty}^{\infty}
 \frac{e^{2(1-\mu)s}}
 {\zeta+\alpha+(1+\mu)/2+e^{2s}}\,\md s\\
 &\qquad\approx\frac{28\sin(\pi\mu)}{15\pi}
 \sum_{j=0}^{15}
 \frac{\vartheta_j e^{2(1-\mu)s_j}}
 {\zeta+\alpha+(1+\mu)/2+e^{2s_j}}.
\end{aligned}
\end{equation}
We define $r_{\mu,\alpha}$ by rescaling this sum so that
$r_{\mu,\alpha}(0)=g_{\mu,\alpha}(0)$, which matches the exact
constant-mode action without fitting a solution.
For $-1<\alpha<0$, construct $r_{\mu,\alpha+1}$ from the same
quadrature \eqref{eq:abel_rational_nodes} with $\alpha$ replaced by
$\alpha+1$, rescaling the sum to satisfy
$r_{\mu,\alpha+1}(0)=g_{\mu,\alpha+1}(0)$. Then set
\begin{equation}\label{eq:rational_negative_alpha}
 r_{\mu,\alpha}(\zeta)=
 \left(1+\frac{\mu}{\zeta+1+\alpha}\right)
 r_{\mu,\alpha+1}(\zeta).
\end{equation}
Partial fractions give 17 terms for $-1<\alpha<0$ and 16 for
$\alpha\ge0$, with poles and residues independent of $N$, $\lambda$,
and the right-hand side. This rule is used only for preconditioning:
the shifted-power asymptotic and truncated quadrature do not provide a
global error bound for $r_{\mu,\alpha}$. In particular, a fixed finite
sum decays as $\zeta^{-1}$ rather than $\zeta^{-\mu}$, so it cannot be
a uniform relative approximation on the positive half-line.

\subsection{Direct construction of selected diagonals}
Each shifted resolvent can be applied through a banded ultraspherical
solve, so applying the fixed 16- or 17-term rational approximation costs $O(N)$.
Its coefficient matrix is nevertheless generally dense and upper
triangular. This inexpensive action does not directly provide the inverse
of the approximate equation operator, which also includes the coordinate
multiplier and the identity term. An inner iterative solve would add
work at every outer GMRES step. We therefore retain a band of the
approximate equation matrix to obtain $P_N$ in
\eqref{eq:abel_rational_pc}. For fixed bandwidths, one banded LU
factorization and each subsequent application of $P_N^{-1}$ cost $O(N)$.

For each positive shift $b$, write $R_b=R_b^{(N)}$ for the coefficient
matrix defined above. We construct the main diagonal and $K$
superdiagonals of the resolvent coefficient matrix
\begin{equation}\label{eq:band_resolvent_matrix}
 R_b=\left(\frac{\beta}{2}D_A+bS_0\right)^{-1}S_0.
\end{equation}
The right factor $S_0$ converts the input Chebyshev coefficients to the
ultraspherical basis. The coefficient maps are those in
\eqref{eq:jfp_column_maps}, and
\begin{equation}\label{eq:band_resolvent_identity}
 \left(\frac{\beta}{2}D_A+bS_0\right)R_b=S_0.
\end{equation}
The matrix on the left is upper triangular with three diagonals.
For column $l$, back substitution starts at row $k=l$ and stops at
$k=\max\{0,l-K\}$. Thus only the required near-diagonal entries of
$R_b$ are computed.

For stable evaluation, write $\widehat r_{kl}=b(R_b)_{kl}$. With zero-based indices,
the scaled matrix entries are computed for $k=l,l-1,\ldots,\max\{0,l-K\}$ by
\begin{equation}\label{eq:band_recurrence}
 \widehat r_{kl}=
 \frac{(S_0)_{kl}-\beta(k+1)\widehat r_{k+1,l}/b
       -[\beta(k+2)/(2b)-1/2]\widehat r_{k+2,l}}
      {\beta k/(2b)+(S_0)_{kk}}.
\end{equation}
Entries below the main diagonal are zero. Scaling avoids forming very
small shifted coefficients before multiplication by their residues.
Let $\operatorname{diag}_K(X)$ retain the entries $X_{kl}$ with
$0\le l-k\le K$ and set all other entries to zero. Denote the Chebyshev
coefficient matrix of $r_{\mu,\alpha}(\mathcal A_\beta)$ by
$R_{\mu,\alpha}^{(N)}$. For $\alpha\ge0$, its selected entries satisfy
\begin{equation}\label{eq:rational_selected_diagonals}
 \operatorname{diag}_K\bigl(R_{\mu,\alpha}^{(N)}\bigr)
 =\sum_{j=0}^{15}a_j\operatorname{diag}_K(R_{b_j}).
\end{equation}
Here $-b_j$ and $a_j$ are the poles and residues of the
normalized sum in \eqref{eq:abel_rational_nodes}. The scaled entries
are accumulated with weights $a_j/b_j$.
For $-1<\alpha<0$, start with the 16-term rule for
$r_{\mu,\alpha+1}$. Partial fractions in
\eqref{eq:rational_negative_alpha} modify its 16 residues and add one
term with shift $b=1+\alpha$. Its selected entries are computed by the
same recurrence and included in the weighted sum, giving 17 terms.
Let $\operatorname{band}_{\ell,b_{\rm up}}(X)$ be the $N\times N$ matrix with
entries $X_{kl}$ for $-b_{\rm up}\le k-l\le\ell$ and zeros otherwise.
The Abel preconditioner is
\begin{equation}\label{eq:abel_rational_pc}
 P_N=\mathrm I_N+2^\mu\lambda\operatorname{band}_{1,b_{\rm up}}\left(
 M_{(1+z)/2}^{(0)}
 R_{\mu,\alpha}^{(N)}\right).
\end{equation}
Its lower bandwidth is at most one: a function of the upper triangular
Euler operator preserves degree, and the coordinate multiplier raises it
by one. Its upper part is generally dense; that truncation is used only
in the preconditioner. In the Abel implementation, the degree-one factor
$M_{(1+z)/2}^{(0)}$ means that $R_{\mu,\alpha}^{(N)}$ is prepared through
$K=b_{\rm up}+1$ selected superdiagonals, after which the product is written directly
into the final $(1,b_{\rm up})$ band.

More generally, multiplication by a polynomial of degree $d$ couples
only coefficients at most $d$ indices apart. Thus, for a short rational
function $r$ with $m$ terms, let $R_{\rm rat}$ be its coefficient matrix.
For polynomial multiplication matrices $M_{\rm L},M_{\rm R}$ of degrees
$d_{\rm L},d_{\rm R}$, the retained upper bandwidth $b_{\rm up}$ of
$M_{\rm L}R_{\rm rat}M_{\rm R}$ requires only the main diagonal and
$K=b_{\rm up}+d_{\rm L}+d_{\rm R}$ superdiagonals of $R_{\rm rat}$.
The lower bandwidth is at most $d_{\rm L}+d_{\rm R}$. These entries are constructed
directly in $O(m(K+1)N)$ work and $O((K+1)N)$ storage, both linear in $N$
for fixed term count, degrees, and bandwidth. Here and in the cost
estimates below, $m$ records the fixed number of terms: 16 or 17 for
the Abel rule and 32 for the Airy rule introduced later. Any additional
exact resolvent factor is applied separately.

\begin{algorithm}[tb]
\caption{Prepare a banded rational preconditioner}\label{alg:band_prepare}
\begin{algorithmic}[1]
\Require $N$, fractional and basis parameters, $m$ rational terms,
upper bandwidth $b_{\rm up}$, fixed-degree
polynomial multipliers, and lower bandwidth $\ell$.
\Ensure Reusable band LU factors, pivots, and solve workspace.
\State Compute the real poles and residues of the selected short rule.
\hfill $O(m)$
\State Set $K=b_{\rm up}+d_{\rm L}+d_{\rm R}$; allocate retained diagonals. \hfill $O((K+1)N)$
\State Accumulate $K+1$ diagonals via \eqref{eq:band_recurrence}
over all poles. \hfill $O(m(K+1)N)$
\State Apply any fixed number of exact real-resolvent factors to the
diagonals by the same recurrence. \hfill $O((K+1)N)$
\State Apply fixed-degree multipliers and insert retained entries and
identity terms into band storage. \hfill $O((K+1)N)$
\State Factor using LAPACK band LU with pivots in the lower band.
\hfill $O(N\ell(\ell+b_{\rm up}+1))$
\State Store factors and pivots; allocate solve workspace.
\hfill $O(N(2\ell+b_{\rm up}+1))$
\end{algorithmic}
\end{algorithm}
Band elimination increases the upper bandwidth to at most $\ell+b_{\rm up}$;
the LU storage array has $(2\ell+b_{\rm up}+1)N$ entries.

Algorithm~\ref{alg:gmres} applies GMRES with fixed right preconditioning to the
system $B_N\mathbf c=\mathbf f_{\rm sys}$, using the accurate contour
action and recomputing the original residual. Here $\mathbf c$ denotes
the algebraic unknown; for Abel, $\mathbf c=\mathbf u$ and
$\mathbf f_{\rm sys}=\mathbf f$.

\begin{algorithm}[tb]
\caption{Contour GMRES with recomputed-residual correction}\label{alg:gmres}
\begin{algorithmic}[1]
\Require Accurate action $B_N$, prepared preconditioner, right-hand side
$\mathbf f_{\rm sys}$, iteration limit $k_{\max}$, target $\tau$, and correction limit.
\Ensure Solution coefficients, recomputed residual, and convergence status.
\State Allocate Krylov workspaces; set $\mathbf c=0$, $\mathbf r_{\rm res}=\mathbf f_{\rm sys}$.
\hfill $O(k_{\max}N+k_{\max}^2)$
\State Start $B_N\Delta\mathbf c=\mathbf r_{\rm res}$ and normalize the initial
Arnoldi vector. \hfill $O(N)$
\For{$j=1,\ldots,k_{\max}$, until the stage stopping criterion is met}
\State Apply the stored right inverse.
\hfill $O((\ell+b_{\rm up}+1)N)$
\State Apply $B_N$ to the preconditioned direction. \hfill $O(MN)$
\State Orthogonalize twice against $j$ Arnoldi vectors. \hfill $O(jN)$
\State Update the small least-squares problem and residual estimate.
\hfill $O(j)$
\EndFor
\State Solve the reduced triangular system and combine stored directions.
\hfill $O(k_{\max}^2+k_{\max}N)$
\State Update $\mathbf c$; recompute $\mathbf r_{\rm res}=\mathbf f_{\rm sys}-B_N\mathbf c$.
\hfill $O(MN)$
\State Test $\|\mathbf r_{\rm res}\|\le\tau\|\mathbf f_{\rm sys}\|$; if needed and permitted,
repeat the solve with the current residual and the correction tolerance. \hfill $O(N)$
\State Reconstruct the physical coefficients and return the actual residual
and convergence status. \hfill $O(MN)$
\end{algorithmic}
\end{algorithm}
\begin{remark}[Reuse and total cost]
Algorithm~\ref{alg:gmres} reuses the contour tables, band factors, and
Krylov workspaces. For fixed node counts $M,m$, bandwidths $\ell,b_{\rm up}$,
and polynomial degrees, preparation costs $O(N)$. Including orthogonalization,
the total work is $O((1+k^2+n_{\rm chk})N)$ and storage is
$O((k+1)N+k^2)$, where $k$ counts all Arnoldi steps, including corrections,
and $n_{\rm chk}$ counts residual checks and reconstructions.
For fixed $k,n_{\rm chk}$, both costs are linear in $N$.
Neither the iteration count nor the bandwidth is assumed uniform in
the problem parameters.
\end{remark}


\section{Numerical experiments}
\label{sec:experiments}

\paragraph{Contour parameters}
All fixed-resolution contour actions use $M=160$,
$\delta=1/2$, $\theta=35^\circ$, $\kappa=8$, and
$Y=\operatorname{arsinh}(34/8)$, with $h=Y/(M-1)$.
These choices are independent of $N$.

The experiments use Julia 1.11.5 and double precision real and complex arithmetic.
Unless stated otherwise, timings use Windows x64, an AMD Ryzen 9 9950X processor with approximately
64 GB of memory, and one Julia thread and one OpenBLAS thread.
The standalone project
contains the complete recurrence~\cite{liu25} as an independent
reference; its construction does not use the contour formula.
Action tests compare
with the complete recurrence and, for selected weighted cases, with
high-precision evaluation of the defining integral. Equation tests use
analytic solutions when available. Complete-solve timings include all
method-specific construction, workspaces, factorization, iteration, and
reconstruction, with one BenchmarkTools evaluation per sample after
compilation. Input generation,
independent validation, and file output are excluded. Cached action times
are identified separately. Sample counts are stated with the results. Floating-point subnormals are flushed
to zero for both methods, while the machine-precision spectral tails
in the prescribed inputs are retained.

\subsection{Integral-action accuracy}
For a resolved spectral length proportional to $N$, prescribe input
Chebyshev coefficients $q_j$. Set
$\epsilon_{\rm mach}=\operatorname{eps}(1)$ and use
\begin{equation}\label{eq:benchmark_input}
 q_j=\upsilon_j\exp\!\left[\log(\epsilon_{\rm mach})
 \min\left\{\frac{j}{\operatorname{round}(4N/5)-1},1\right\}\right],
 \qquad 0\le j<N.
\end{equation}
The $\upsilon_j$ are random numbers in $[-1,1]$,
with $|\upsilon_0|=1$. The envelope reaches machine
precision near $4N/5$; the remaining coefficients stay at that scale.

We set $\beta=\mu$ and $\alpha=0$, and test seven orders from $0.001$ to
$0.999$ at five sizes: $N=16$, $64$, $128$, $256$, and $512$.
Both the contour action and the independent
JFP reference retain all $N+1$ output coefficients. The left panel of
Table~\ref{tab:accuracy} reports the relative Euclidean discrepancy
in the output coefficients, maximized over the five sizes at each order.

We test the exponents
$\alpha=-0.999,-0.9,-0.5,0,0.5,2$, the seven fractional orders above,
and $N=16,64,256$. The 126 coefficient inputs use
\eqref{eq:benchmark_input}, so their decay rate changes with $N$.
The weight $t^\alpha$ belongs to the basis and is never sampled in an
action. The right panel of Table~\ref{tab:accuracy} gives the largest
relative discrepancy in all $N+1$ output coefficients for each exponent;
the overall maximum is
$\WeightedMaximum$.

\begin{table}[tbp]
\centering
\caption{Integral-action accuracy at $M=160$ for dimension-dependent random spectral inputs. Left: maximum relative coefficient discrepancy over $N=16,64,128,256,512$ for each order at $\alpha=0$. Right: maximum over seven orders and $N=16,64,256$ for each endpoint exponent.}
\label{tab:accuracy}
\small\setlength{\tabcolsep}{4pt}
\begin{minipage}[t]{.48\linewidth}\centering
\textit{Random spectral inputs}\par\smallskip
\begin{tabular}{rr}
\hline
$\mu$ & Maximum discrepancy \\
\hline
0.001 & $1.92\times10^{-15}$ \\
0.01 & $7.24\times10^{-16}$ \\
0.3 & $1.67\times10^{-15}$ \\
0.5 & $2.36\times10^{-15}$ \\
0.8 & $5.93\times10^{-15}$ \\
0.99 & $1.76\times10^{-15}$ \\
0.999 & $2.54\times10^{-15}$ \\
\hline
\end{tabular}
\end{minipage}\hfill
\begin{minipage}[t]{.48\linewidth}\centering
\textit{Endpoint weights}\par\smallskip
\begin{tabular}{rr}
\hline
$\alpha$ & Maximum discrepancy \\
\hline
-0.999 & $1.58\times10^{-15}$ \\
-0.9 & $4.31\times10^{-15}$ \\
-0.5 & $3.86\times10^{-15}$ \\
0.0 & $3.48\times10^{-15}$ \\
0.5 & $5.41\times10^{-15}$ \\
2.0 & $1.11\times10^{-14}$ \\
\hline
\end{tabular}
\end{minipage}
\end{table}


For independent integral validation, eight cases with
$\alpha=-0.999,-0.5,0.5,2$, $\mu=0.01,0.8$ and $N=32$ use a fixed
exponentially decaying polynomial and five fractional-coordinate points.
The defining beta integral is evaluated at 256-bit precision by removing
the endpoint powers and applying DE quadrature on two half intervals.
Reference refinement changes the values by less than $2.6\times10^{-32}$
relative to their maximum magnitude. For the Float64 values
$v_l=2^{-\mu}t_l^{-\alpha}(\mathcal I_x^\mu f_N)(x_l)$ and their independent references $v_l^{\rm ref}$,
$\max_l\bigl(|v_l-v_l^{\rm ref}|/\max\{1,|v_l^{\rm ref}|\}\bigr)$ is
$\BetaMaximum$. High precision is used only for reference data.


\subsection{Cost of one integral action}
The timing tests use the same inputs \eqref{eq:benchmark_input}.

At $\mu=\beta=1/2$, $\alpha=0$, and $M=160$, each point in
Figure~\ref{fig:action_timing} is the median of seven BenchmarkTools
samples using a prepared plan. There are 23 dimensions through $N=131072$.
An action at that size takes $\ActionLargestSeconds$ seconds, with a fitted time slope
$\ActionSlope$ over $1024\le N\le131072$. The compact resolvent cache occupies
$\ActionCacheMB$ MB there. These timings use the integral on $[-1,1]$.

\begin{figure}[tbp]\centering
\includegraphics[width=.7\textwidth]{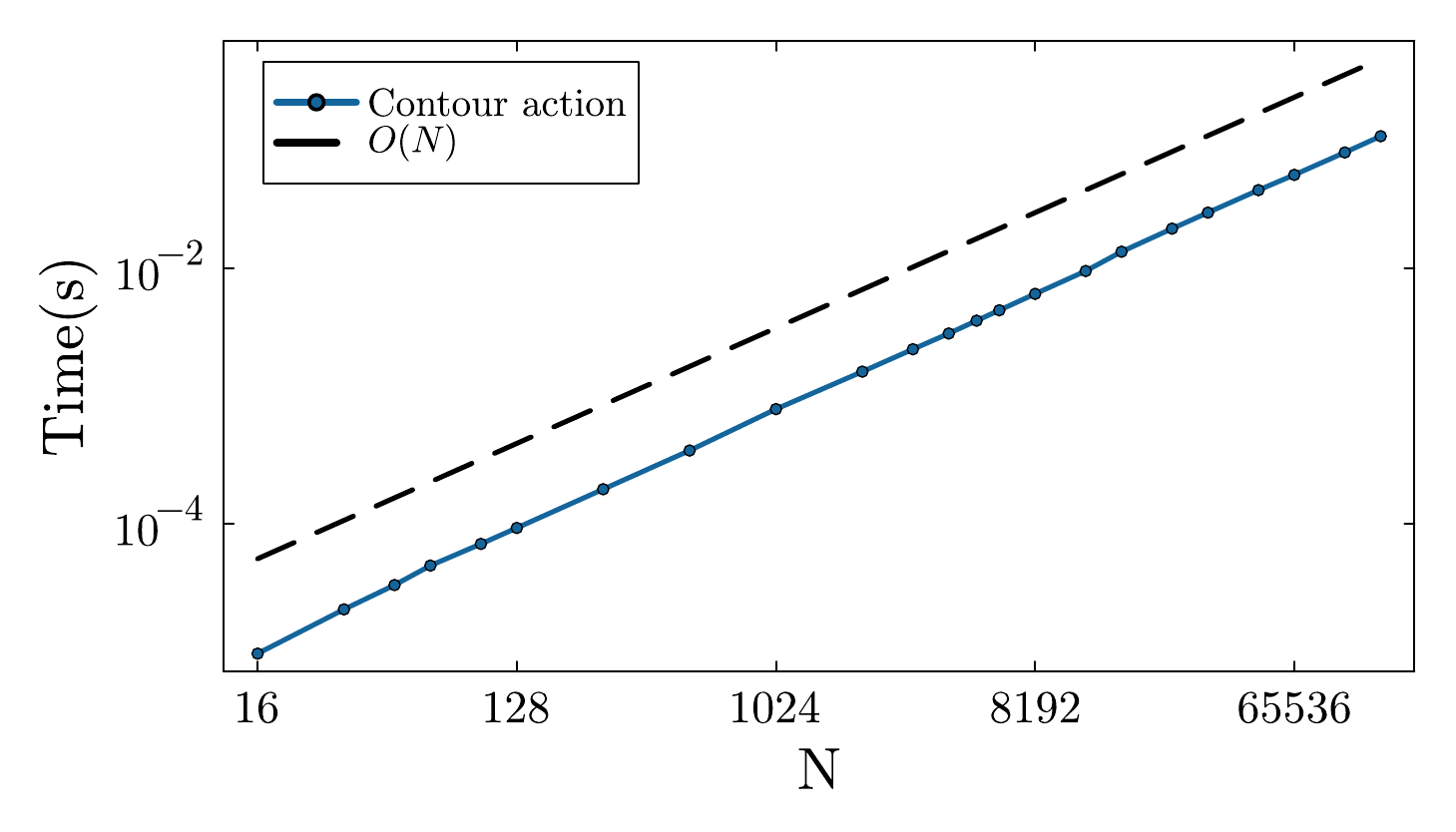}
\caption{One cached fractional-integral action on
\eqref{eq:benchmark_input}. The dashed line is an $O(N)$ reference.
No complete integration matrix is constructed.}
\label{fig:action_timing}
\end{figure}



\subsection{Abel equations}
\label{sec:abel_pc_results}
\label{sec:timing}
The Abel tests use
\[
 u(x)+2^{-\mu}\lambda(\mathcal I_x^\mu u)(x)=f(x),\qquad -1<x\le1.
\]
The complete-matrix baseline constructs the independent JFP recurrence
and uses Julia's library \texttt{UpperHessenberg} solve. Its lower
bandwidth is one, so both construction and structured solution cost
$O(N^2)$; a generic cubic factorization is not used. The contour--band
method uses the rule \eqref{eq:abel_rational_nodes}, upper bandwidth
$b_{\rm up}=32$, zero-start GMRES, and reorthogonalization, with the contour
parameters specified above. All preparation is charged to the solve.

\paragraph{Increasing spectral complexity}
The 45-point main grid uses $\alpha=0$ and
\[
 \mu\in\{0.1,0.25,0.5,0.75,0.9\},\quad
 \lambda\in\{4,100,1000\},\quad N\in\{256,1024,2048\}.
\]
It is supplemented by six problems at $\mu=0.5$, $N=128,512$;
six at $N=512$, $\lambda=100$, $\alpha=\pm0.5$,
$\mu=0.25,0.5,0.75$; and three at $N=512$, $\lambda=-1$,
$\alpha=0$, $\mu=0.25,0.5,0.75$. The right-hand sides follow
\eqref{eq:benchmark_input}. Their decay rate changes with $N$ and their
rounding-level tails are retained, so this is a cost test for increasingly
complex data, not convergence to one fixed continuous solution.

The 60 contour--band runs use \AbelMinSteps--\AbelMaxSteps{} GMRES steps
with relative stopping tolerance $10^{-14}$.
Table~\ref{tab:abel_cost_accuracy} reports the solution discrepancies
from the independent complete-matrix reference. Coefficient differences
and original equation residuals use relative Euclidean norms. The
regular-factor differences use the relative maximum norm on 2001 equally
spaced points in $z$.

\begin{table}[tbp]
\centering
\caption{Abel solution discrepancies from the independent complete-matrix reference and original relative residuals. Each entry is the maximum over the indicated parameter group.}
\label{tab:abel_cost_accuracy}
\small\setlength{\tabcolsep}{4pt}
\begin{tabular}{lrrrr}
\hline
Parameters & Cases & Coefficients & Regular factor & Residual \\
\hline
$\alpha=0,\ \lambda=4$ & 17 & $9.72\times10^{-15}$ & $1.10\times10^{-14}$ & $9.76\times10^{-15}$ \\
$\alpha=0,\ \lambda=100$ & 17 & $4.91\times10^{-14}$ & $1.12\times10^{-13}$ & $1.81\times10^{-13}$ \\
$\alpha=0,\ \lambda=1000$ & 17 & $3.16\times10^{-13}$ & $2.09\times10^{-13}$ & $2.11\times10^{-12}$ \\
$\alpha=-0.5,\ \lambda=100$ & 3 & $1.20\times10^{-14}$ & $1.22\times10^{-14}$ & $3.19\times10^{-14}$ \\
$\alpha=0.5,\ \lambda=100$ & 3 & $8.62\times10^{-15}$ & $1.13\times10^{-14}$ & $3.27\times10^{-14}$ \\
$\alpha=0,\ \lambda=-1$ & 3 & $1.92\times10^{-15}$ & $2.25\times10^{-15}$ & $1.17\times10^{-15}$ \\
\hline
\end{tabular}
\end{table}


We time 33 configurations: orders $0.1,0.5,0.9$ in the main grid and the
six additional $\mu=0.5$ configurations. Each method has three
BenchmarkTools samples, with one complete solve per sample. At
$N=2048$, $\mu=0.5$, the contour--band times for $\lambda=4,100,1000$
are $\AbelFourBandSeconds$, $\AbelHundredBandSeconds$, and $\AbelThousandBandSeconds$ seconds, compared with
$\AbelFourDirectSeconds$, $\AbelHundredDirectSeconds$, and $\AbelThousandDirectSeconds$ seconds for the complete method.
Figure~\ref{fig:abel_pc_timing}(a,b) clearly shows $O(N)$ scaling for the
contour--band method and $O(N^2)$ scaling for the complete-matrix method.

\paragraph{Timing and storage}
For an additional timing and storage test, set $\mu=\beta=1/2$,
$\alpha=0$, and $\lambda=\sqrt2$, so the equation becomes
\[
 u(x)+(\mathcal I_x^{1/2}u)(x)=f_N(x),\qquad -1<x\le1.
\]
The JFP coefficients of $f_N$ are
\[
 b_j=\exp\!\left[\log(\epsilon_{\rm mach})\frac{j}{k-1}\right],
 \quad 0\le j<k,\qquad k=\lfloor4N/5\rfloor,
\]
followed by uniform noise in $[-\epsilon_{\rm mach},\epsilon_{\rm mach}]$
from a Mersenne Twister with seed $20261005+N$.
Both methods use the same vector at each $N$.
The complete matrix uses its upper-Hessenberg structure for a direct
solve; the contour method uses $M=160$ and zero-start unpreconditioned
GMRES with tolerance $10^{-13}$ and at most 64 steps.

The timings in Figure~\ref{fig:timing_storage}(c) use
an AMD Ryzen 9 9950X, Julia 1.11.5,
and one Julia and BLAS thread, with medians over five warmed samples.
Setup includes operator preparation and workspaces; total time adds
the coefficient solve and a recomputed residual check, excluding input
generation and reconstruction. Stored representation excludes action
and solver workspaces.
At $N=16384$, setup plus solve takes $41.7$ s for the complete matrix
and $0.231$ s for the contour method, using $2048$ and $40.1$ MiB,
respectively. The contour method reaches $N=131072$ in $1.83$ s with
$321$ MiB; all contour solves take 15--16 GMRES steps.
At shared dimensions, the maximum coefficient difference is
$3.39\times10^{-13}$.

\begin{figure}[tbp]\centering
\includegraphics[width=\textwidth]{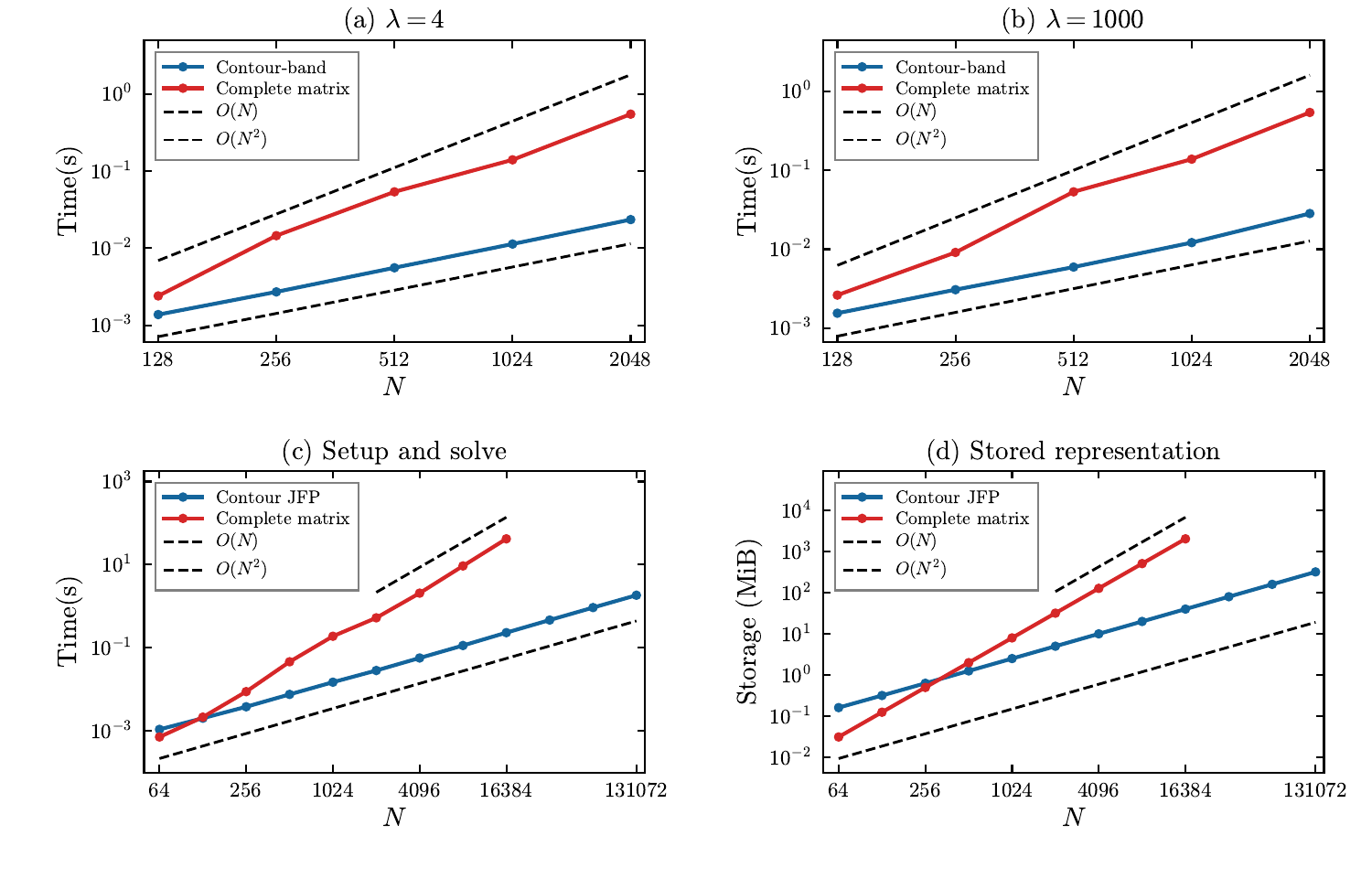}
\caption{Abel timing and storage comparisons at $\mu=0.5$, $\alpha=0$.
(a,b): complete solution cost for $\lambda=4$ and $\lambda=1000$,
respectively, with banded preconditioning and GMRES tolerance $10^{-14}$.
(c,d): setup-plus-solve time and stored representation for
$\lambda=\sqrt2$, using unpreconditioned contour GMRES at tolerance
$10^{-13}$, with the contour method tested through $N=131072$ and
the complete matrix through $N=16384$.
Black dashed lines indicate $O(N)$ and $O(N^2)$ reference rates.}
\label{fig:abel_pc_timing}
\label{fig:timing_storage}
\end{figure}


\subsection{Muti-order applications}
\label{sec:mathcomp}
Three applications from~\cite{liu25} test multiple orders and deferred
correction. The linear multi-order and BBO examples use the banded
preconditioner with $b_{\rm up}=32$; SDC uses the accurate integral action directly. Independent complete matrices
use library solvers exploiting the small lower bandwidth. The five-sample
median times in Table~\ref{tab:mathcomp_current} include construction,
solution and any reconstruction.

\begin{table}[tbp]
\centering
\caption{Muti-order applications. Times include construction and reconstruction (five samples). The multi-order and BBO contour solves use banded preconditioning; SDC uses the integral action directly. Errors are defined in the text.}
\label{tab:mathcomp_current}
\small\setlength{\tabcolsep}{4pt}
\begin{tabular}{lrrrrr}
\hline
Problem & $N$ & Direct error & Contour error & Direct (s) & Contour (s) \\
\hline
Multi-order & 30 & $1.26\times10^{-15}$ & $2.67\times10^{-15}$ & 0.00071 & 0.00098 \\
BBO & 50 & $8.88\times10^{-16}$ & $2.89\times10^{-15}$ & 0.00122 & 0.00125 \\
SDC & 10 & $1.11\times10^{-15}$ & $2.20\times10^{-14}$ & 0.00084 & 0.00148 \\
\hline
\end{tabular}
\end{table}


A superscript $\mathrm C$ marks a Caputo derivative with lower terminal $-1$.
The multi-order equation is
\begin{gather*}
 u+\sqrt{1+x}\,\mathcal I_x^{1/3}u
   +\mathcal I_x^{1/2}[(1+\cdot)^{1/3}u]=f,\\
 f(x)=(1+x)^{3/2}
 +\left[\frac{\Gamma(5/2)}{\Gamma(17/6)}
 +\frac{\Gamma(17/6)}{\Gamma(10/3)}\right](1+x)^{7/3}.
\end{gather*}
Its exact solution $(1+x)^{3/2}$ is degree nine for
$(\alpha,\beta)=(0,1/6)$. The integrals increase degree by two and three; GMRES takes
\MultiSteps{} steps at $N=30$.

For the Basset--Boussinesq--Oseen equation
\[
 v'(x)+{}^{\rm C}\!\mathcal D_x^{1/2}v(x)+v(x)=0,\qquad v(-1)=1,\quad -1\le x\le1,
\]
we solve the integral equation for $u=v'$
\[
 u+\mathcal I_x^{1/2}u+\mathcal I_x^1u=-1
\]
and reconstruct $v=1+\mathcal I_x^1u$, using
$(\alpha,\beta)=(0,1/2)$. An independent reference is
\[
 v(x)=2\Re\left\{\left(\frac12-\frac{i}{2\sqrt3}\right)
 \operatorname{erfcx}(-z_*\sqrt{1+x})\right\},\qquad z_*=\frac{-1+i\sqrt3}{2}.
\]
At $N=50$, GMRES takes \BBOSteps{} steps.
Both linear tests use zero-start GMRES, reorthogonalization and
relative tolerance $2\times10^{-14}$. Analytic errors are sampled at
2001 polynomial-coordinate points. At these small sizes the complete-matrix method remains faster.

The deferred-correction problem is
\[
 {}^{\rm C}\!\mathcal D_x^{1/2}u=u+\sqrt{1+x}-\frac{\sqrt\pi}{2}(1+x),
 \qquad u(-1)=0,
\]
whose integral form is
\[
 u(x)-(\mathcal I_x^{1/2}u)(x)
 =\frac{\sqrt\pi}{2}(1+x)-\frac23(1+x)^{3/2},
\]
with exact solution $u=\sqrt\pi(1+x)/2$. Ten mapped Chebyshev nodes
and the same piecewise-constant product-integration initializer and
correction are used for both integral representations. Both stop after
\SDCDirectSteps{} corrections at discrete residual $10^{-15}$. The absolute solution errors are reported in Table~\ref{tab:mathcomp_current}.

\begin{figure}[tbp]\centering
\includegraphics[width=\textwidth]{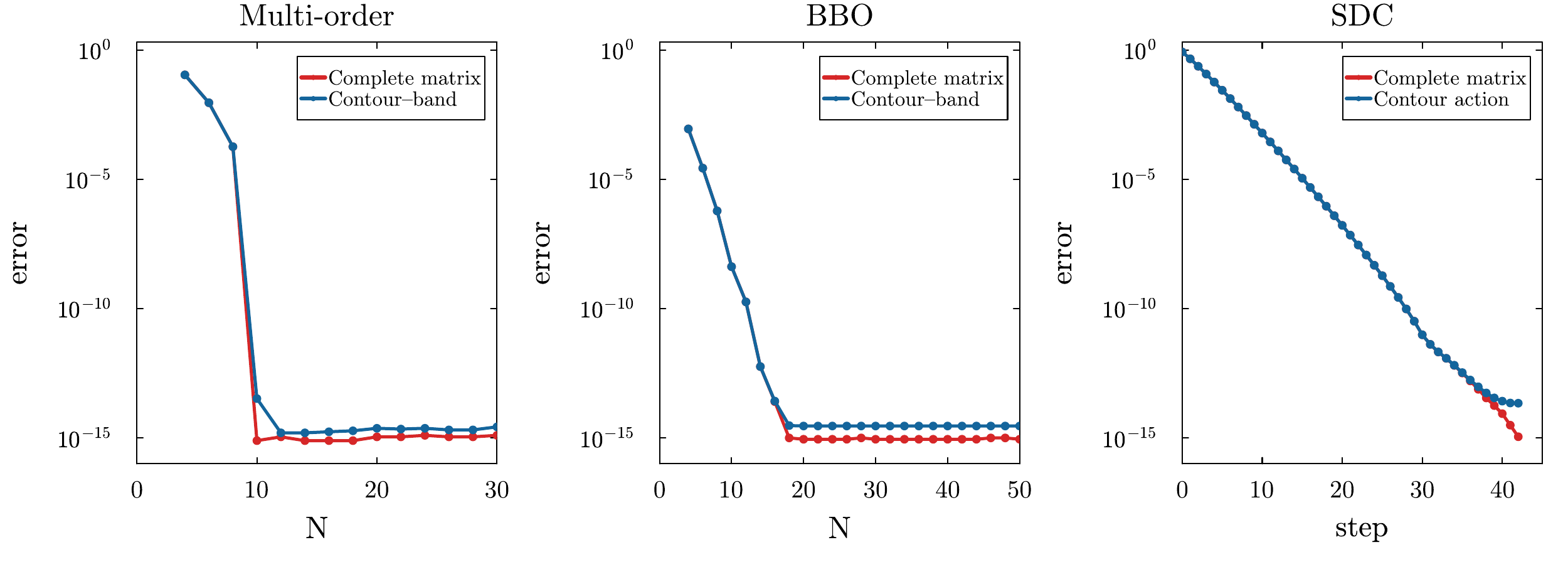}
\caption{Muti-order applications. Left and middle: sampled relative
solution errors for the multi-order and BBO equations. Right: absolute
SDC solution error. All contour actions use $M=160$.}
\label{fig:mc_sdc}
\end{figure}

\subsection{Riemann--Liouville eigenvalue problem}
\label{sec:mc_eigen}
Consider the eigenproblem from~\cite{liu25},
\[
 -\mathcal D_x^{3/2}u=\lambda_{\rm eig}u,\qquad u(-1)=u(1)=0.
\]
The equivalent integral equation is
\[
 u(x)+\lambda_{\rm eig}(\mathcal I_x^{3/2}u)(x)
 =\lambda_{\rm eig}\sqrt{\frac{1+x}{2}}(\mathcal I_x^{3/2}u)(1).
\]
Arnoldi is applied to
$u\mapsto t^{1/2}(\mathcal I_x^{3/2}u)(1)-\mathcal I_x^{3/2}u(x)$,
where $t=(1+x)/2$. Its eigenvalues are $1/\lambda_{\rm eig}$, and
$(\alpha,\beta)=(1/2,3/2)$. Both methods use Krylov dimension 40,
tolerance $10^{-13}$ and the same exponentially decaying starting vector
to find the seven largest-modulus inverse eigenvalues. At $N=500$ both
take 40 actions.

The scalar eigenvalue equation is
\[
 E_{3/2,3/2}(-2^{3/2}\lambda_{\rm eig})=0,
 \qquad E_{a,b}(z)=\sum_{j=0}^\infty\frac{z^j}{\Gamma(aj+b)}.
\]
Table~\ref{tab:eigen_mlf} lists the eigenvalues and their individual
characteristic-function values, with the conjugate pair combined in row 6.
The complete-matrix and matrix-free solves take
\EigenDirectSeconds{} and \EigenContourSeconds{} seconds, respectively
(medians of five samples).

\begin{table}[tbp]
\centering
\caption{Eigenvalues of the Riemann--Liouville problem at $N=500$, computed by the complete-matrix and contour-based matrix-free Julia implementations. The conjugate pair shares row 6. Characteristic-function values are evaluated independently at 256-bit precision using the stored Float64 eigenvalues.}
\label{tab:eigen_mlf}
\small\setlength{\tabcolsep}{4pt}
\renewcommand{\arraystretch}{1.35}
\begin{tabular}{cccl}
\hline
Realization & Index & $\lambda_{\rm eig}$ & $|E_{3/2,3/2}(-2^{3/2}\lambda_{\rm eig})|$ \\
\hline
\multirow{6}{*}{\shortstack{Complete\\matrix}} & 1 & $1.794435495663995$ & $2.35\times10^{-16}$ \\
 & 2 & $6.177290302782696$ & $1.10\times10^{-15}$ \\
 & 3 & $11.359485354307592$ & $2.56\times10^{-15}$ \\
 & 4 & $19.740438605345727$ & $3.77\times10^{-15}$ \\
 & 5 & $22.834767521667569$ & $4.16\times10^{-15}$ \\
 & 6 & $35.255579686697054\pm 7.532188956982297i$ & $5.74\times10^{-15}$ \\
\hline
\multirow{6}{*}{Matrix-free} & 1 & $1.794435495663995$ & $2.35\times10^{-16}$ \\
 & 2 & $6.177290302782539$ & $6.25\times10^{-16}$ \\
 & 3 & $11.359485354309625$ & $3.27\times10^{-16}$ \\
 & 4 & $19.740438605272438$ & $6.56\times10^{-16}$ \\
 & 5 & $22.834767521829491$ & $7.41\times10^{-16}$ \\
 & 6 & $35.255579687067758\pm 7.532188956768764i$ & $1.03\times10^{-15}$ \\
\hline
\end{tabular}
\end{table}



\subsection{Fractional Airy boundary problem}
\label{sec:mc_airy}
\label{sec:airy_rational}

Consider the Riemann--Liouville boundary problem
\begin{equation}\label{eq:mc_airy}
 \epsilon i^{3/2}\mathcal D_x^{3/2}u-xu=0,\quad -1<x<1,
 \qquad u(-1)=0,\quad u(1)=1,
\end{equation}
with the principal value of $i^{3/2}$. Writing
$u=\mathcal I_x^{3/2}v+a\sqrt{(1+x)/2}$, where
$v=\mathcal D_x^{3/2}u$, gives the integral equation
\[
 \epsilon i^{3/2}v(x)-x(\mathcal I_x^{3/2}v)(x)
 -ax\sqrt{\frac{1+x}{2}}=0,
\]
with $a+(\mathcal I_x^{3/2}v)(1)=1$.
The augmented preconditioning system is
\begin{equation}\label{eq:airy_band_system}
{\renewcommand{\arraystretch}{1.25}
 \left(\begin{array}{c|c}
  2^{3/2}\mathbf1_N^\T R_{3/2,1/2}^{(N)} & 1\\
  \hline
  P_N & -\mathbf x
 \end{array}\right)
 \begin{pmatrix}\mathbf v\\a\end{pmatrix}
 =\begin{pmatrix}1\\\mathbf0\end{pmatrix}.
}
\end{equation}
Here $P_N$ is the banded interior block, $\mathbf1_N$ is the all-ones
vector, and $\mathbf v,\mathbf x$ represent the auxiliary unknown and
$x\sqrt{(1+x)/2}$ in the JFP basis with $(\alpha,\beta)=(1/2,1/2)$.
The first row imposes $u(1)=1$; eliminating $a$ gives a rank-one
correction to the banded solve. The Gamma recurrence reduces the rational
approximation to order $1/2$, using 32 trapezoidal nodes on $[-5,15]$
and an exact real resolvent. The lower bandwidth is five.

We use $\epsilon=10^{-4},10^{-5},10^{-6},10^{-7},10^{-8}$ with upper
bandwidths $32,32,32,64,128$, respectively. Zero-start GMRES uses relative
tolerance $10^{-13}$ for the first three parameters and $10^{-11}$ for
the last two, followed by at most three original-residual corrections
at relative tolerance $10^{-3}$. Reported steps include all corrections.

\begin{figure}[tbp]\centering
\subfloat[$\epsilon=10^{-4}$]{\includegraphics[width=.32\textwidth]{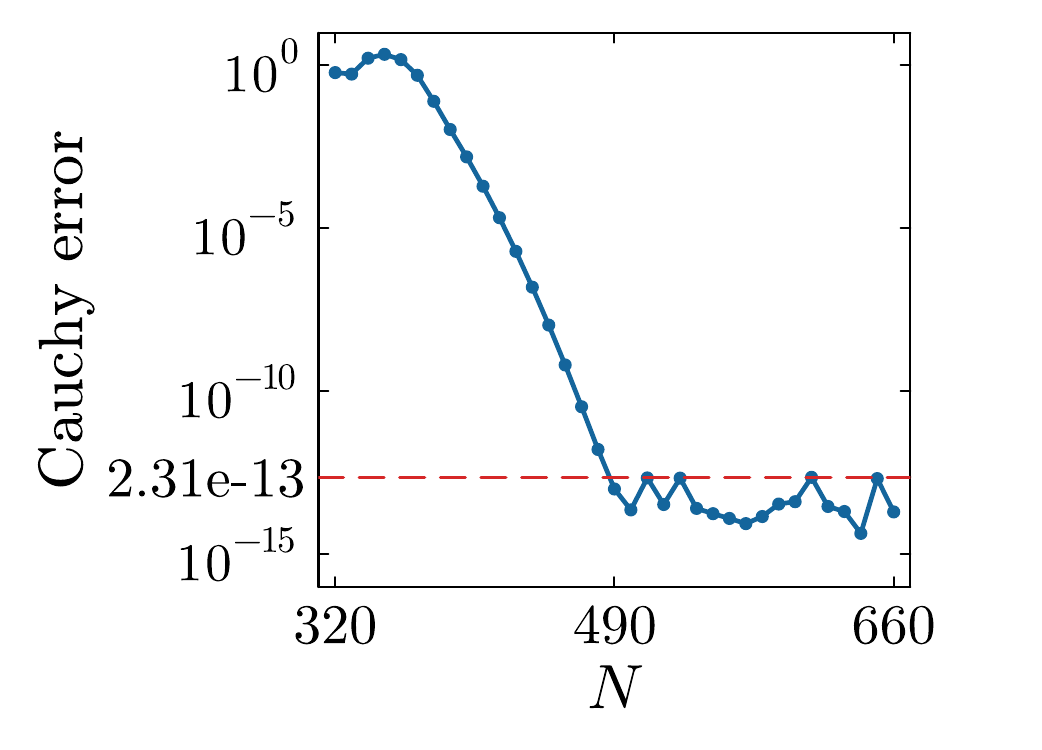}}\hfill
\subfloat[$\epsilon=10^{-5}$]{\includegraphics[width=.32\textwidth]{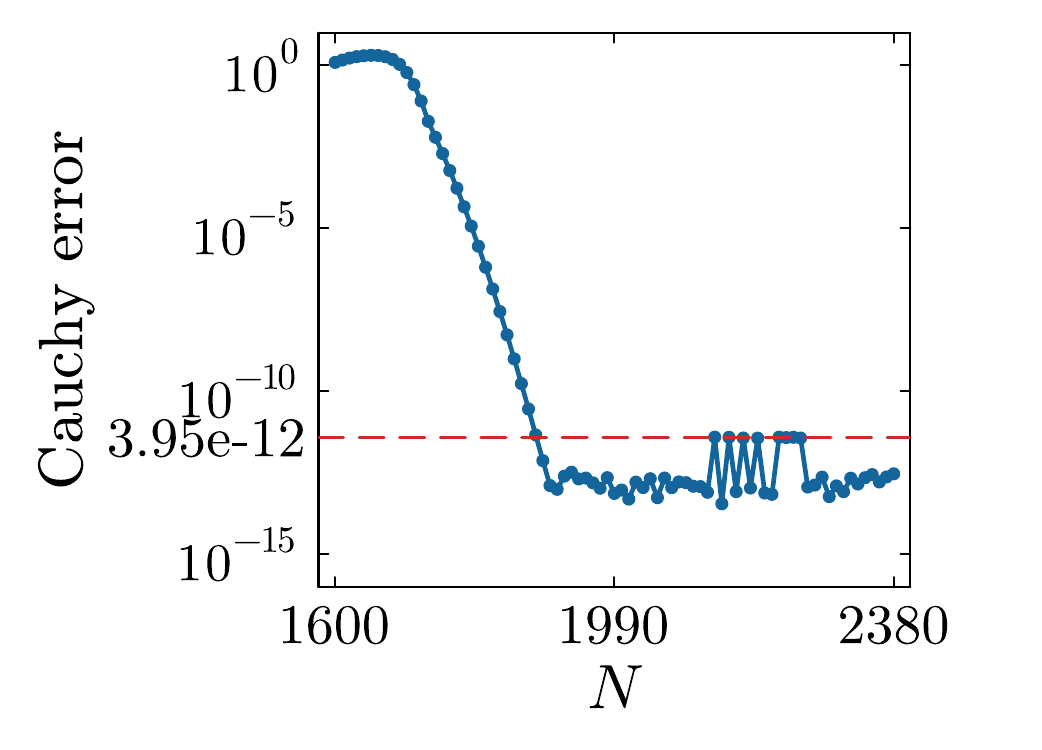}}\hfill
\subfloat[$\epsilon=10^{-6}$]{\includegraphics[width=.32\textwidth]{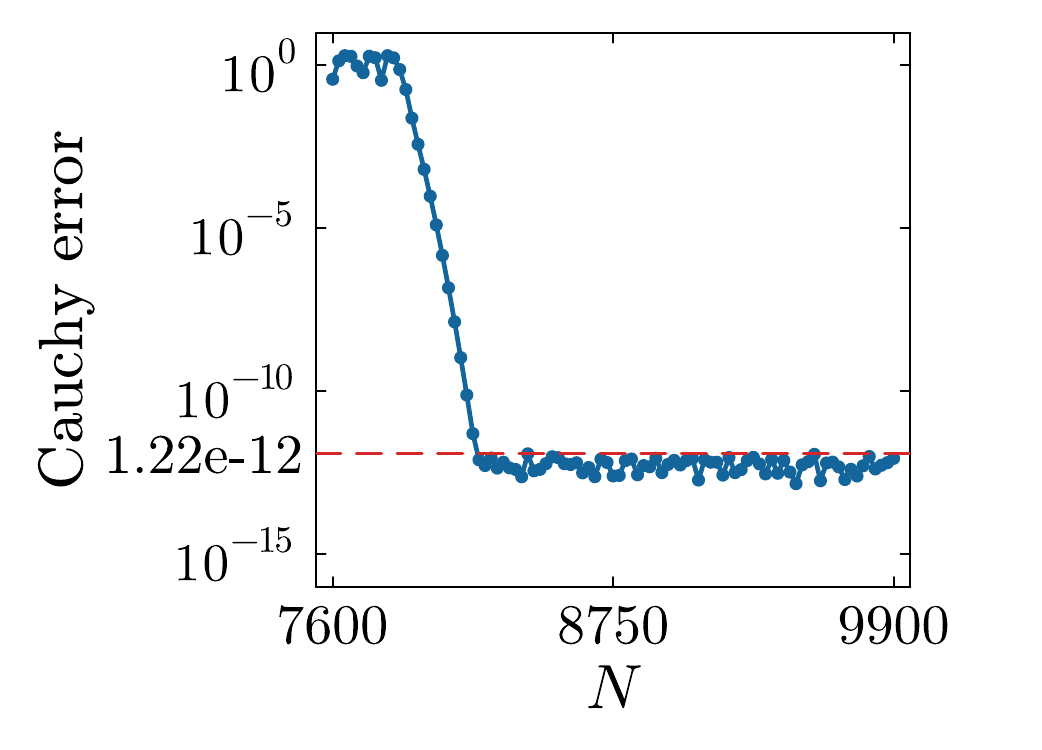}}\\[4pt]
\subfloat[$\epsilon=10^{-7}$]{\includegraphics[width=.32\textwidth]{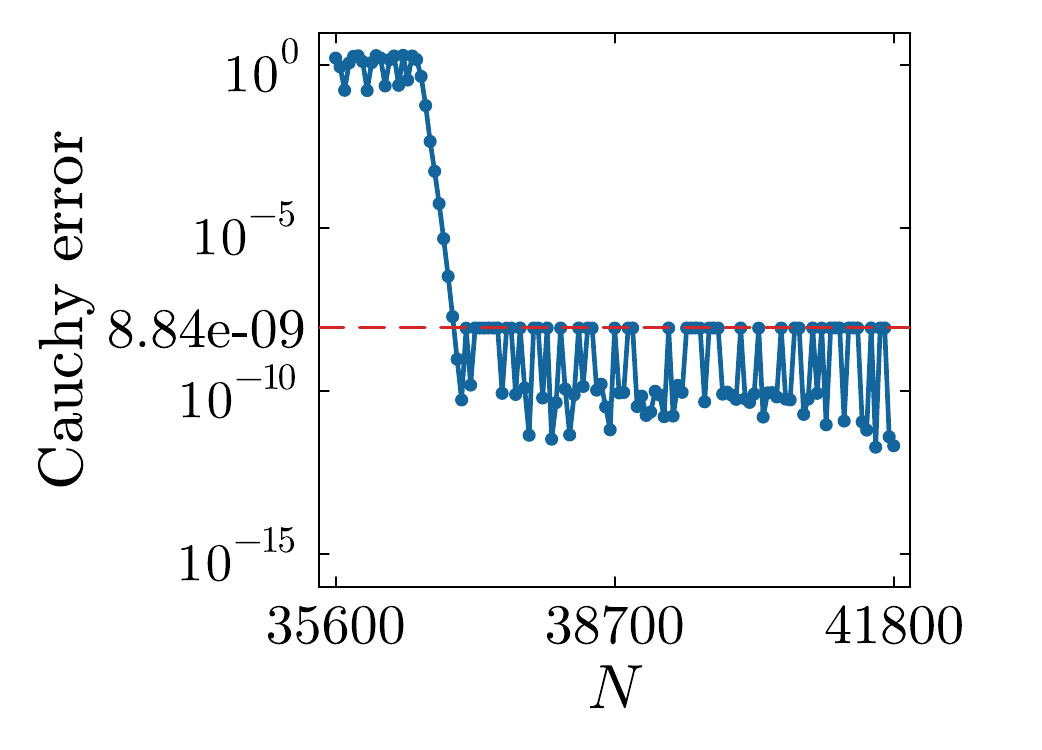}}\hfill
\subfloat[$\epsilon=10^{-8}$]{\includegraphics[width=.32\textwidth]{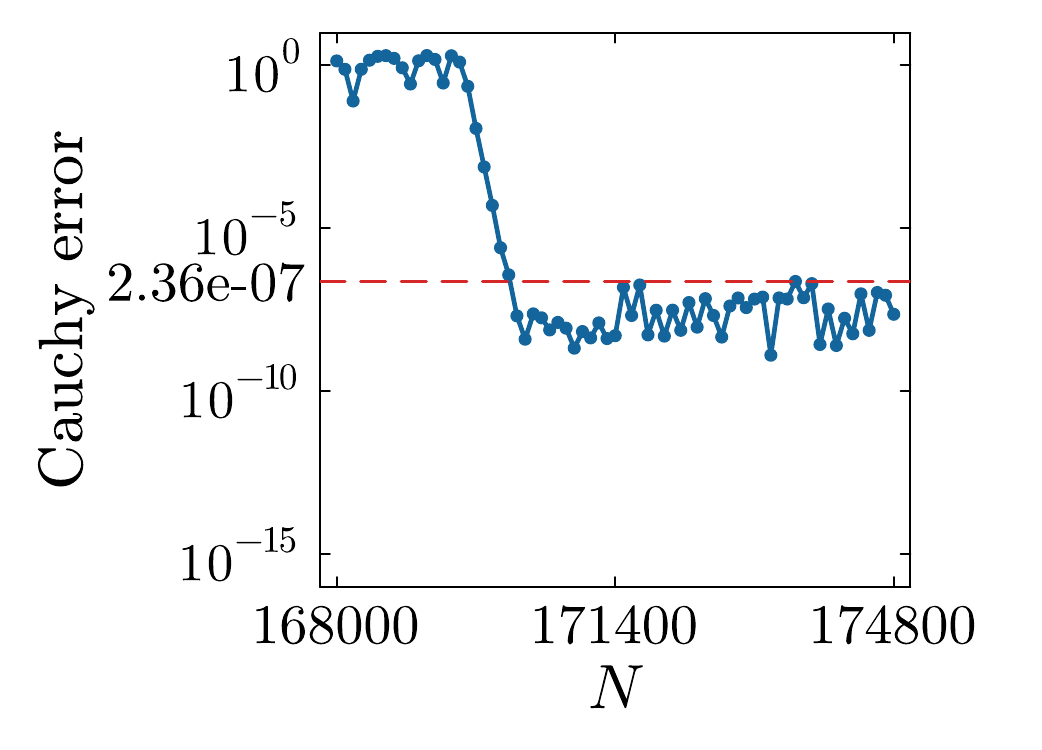}}\hfill
\subfloat[$\epsilon=10^{-4}$, $N=2000$]{\includegraphics[width=.32\textwidth]{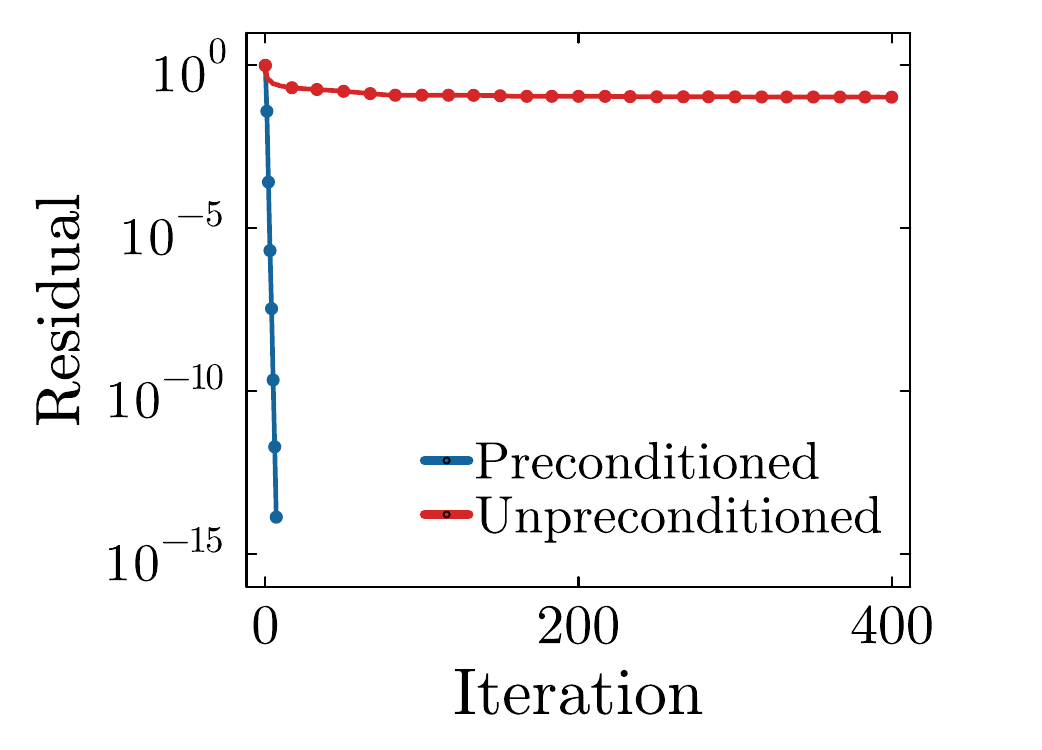}}
\caption{Airy results. (a)--(e): relative sampled maximum-norm differences
$E_N$ between solutions at $N$ and $N+\Delta N$, with
$\Delta N=20,20,100,200,200$, respectively, normalized by the latter.
(f): initial GMRES residual histories with banded preconditioning
(blue circles) and without (red squares), at tolerance $10^{-13}$.}
\label{fig:airy_cauchy}
\end{figure}

Table~\ref{tab:airy_large} summarizes the final scan resolutions.
For the reconstructed coefficients $u_j$, $j_*$ is the first zero-based
index such that $|u_j|\le\epsilon_{\rm mach}\max_l|u_l|$ for all $j\ge j_*$.

\begin{table}[tbp]\centering
\caption{Final Airy scan resolutions. Times include preparation, all residual corrections, and reconstruction (one BenchmarkTools sample).}
\label{tab:airy_large}
\small\setlength{\tabcolsep}{5pt}\begin{tabular}{rrrrrrr}\hline
$\epsilon$ & $N$ & $b_{\rm up}$ & Total steps & Original residual & Time (s) & $j_*$\\\hline
$10^{-4}$ & 680 & 32 & 7 & $7.32\times10^{-14}$ & 0.016 & 514 \\
$10^{-5}$ & 2400 & 32 & 13 & $1.63\times10^{-14}$ & 0.083 & 1942 \\
$10^{-6}$ & 10000 & 32 & 36 & $3.11\times10^{-13}$ & 0.736 & 8280 \\
$10^{-7}$ & 42000 & 64 & 36 & $4.51\times10^{-13}$ & 3.360 & 37207 \\
$10^{-8}$ & 175000 & 128 & 194 & $5.51\times10^{-12}$ & 66.015 & 170663 \\
\hline\end{tabular}\end{table}


At $\epsilon=10^{-4}$, $N=2000$, preconditioning reduces the GMRES
residual estimate to $1.39\times10^{-14}$ in seven steps, compared with
$0.106$ after 400 steps without preconditioning
(Figure~\ref{fig:airy_cauchy}(f)).

At $\epsilon=10^{-8}$, $N=\SweepEightN$, three residual corrections
reduce the original residual from $\SweepEightInitialResidual$ to
$\SweepEightThirdResidual$, with \SweepEightSteps{} total steps.
The physical Cauchy differences decay rapidly before fluctuating between
orders $10^{-9}$ and $10^{-7}$ (Figure~\ref{fig:airy_cauchy}(e)).


\FloatBarrier
\section{Conclusion}
We have presented a fast contour method for Riemann--Liouville fractional
integration in a Chebyshev-type JFP basis. The Gamma-ratio representation
reduces the action to banded shifted solves. The quadrature error decays
superalgebraically, and the node count required for a fixed tolerance is
independent of the basis dimension. This gives $O(N)$ action cost and
storage. Direct construction of selected rational diagonals also yields
banded right preconditioners with linear preparation and application
costs for fixed bandwidth.

Neumann-series estimates establish supergeometric convergence of exact
operator GMRES for Abel equations and finite fractional integral sums
with bounded variable coefficients. Numerical experiments demonstrate
accurate integral actions, spectral convergence for the analytic examples,
and reduced solution costs across integral equations, eigenvalue problems,
and the fractional Airy boundary problem. Further work will address the
convergence of the preconditioned coefficient systems and extensions to
general Jacobi parameters.

\section*{Code Availability}
The Julia implementation, single-case examples, and locked dependency
environment are available at \url{https://github.com/xiaolin9907/FastJFP}.

\end{document}